\documentclass[a4paper]{amsart}

\usepackage{amssymb}

\usepackage{mathdots}
\newtheorem{thm}{Theorem}[section]
\newtheorem{cor}[thm]{Corollary}
\newtheorem{lem}[thm]{Lemma}
\newtheorem{prop}[thm]{Proposition}

\theoremstyle{remark}

\theoremstyle{definition}



\begin{document}
\pagestyle{myheadings}
\address{Department of Mathematics, Technion - Israel Institute of Technology, 32000, Haifa, Israel.}
\email{max@tx.technion.ac.il}
\title{A Distributional Treatment of Relative Mirabolic Multiplicity One}
\subjclass[2010]{20G25, 22E50}
\keywords{ distinguished representations, p-adic symmetric spaces, mirabolic subgroup, invariant distributions}
\author{Maxim Gurevich}
\date{\today}

\begin{abstract}
We study the role of the mirabolic subgroup $P$ of $G=\mathbf{GL}_n(F)$ ($F$ a $p$-adic field) in smooth irreducible representations of $G$ that possess a non-zero invariant functional relative to a subgroup of the form $H_{k} = \mathbf{GL}_k(F)\times \mathbf{GL}_{n-k}(F)$. We show that if a non-zero $H_1$-invariant functional exists on a representation, then every $P\cap H_1$-invariant functional must equal to a scalar multiple of it. When $k>1$, we give a reduction of the same problem to a question about invariant distributions on the nilpotent cone of the tangent space of the symmetric space $G/H_k$. Some new distributional methods, which are suitable for a setting of non-reductive groups, are developed.

\end{abstract}

\maketitle

\section{Introduction}
We study the nature of smooth irreducible complex representations of $G_n = \mathbf{GL}_n(F)$ over a non-archimedean field $F$ of characteristic $0$, that are distinguished by subgroups of the form $H_{k,n}=\mathbf{GL}_{k}(F)\times\mathbf{GL}_{n-k}(F)$ (the maximal standard Levi subgroups). Given a smooth irreducible representation $\pi$ of $G_n$ on a space $V$, it is said that $\pi$ is distinguished by a subgroup $H<G_n$ if there is a non-zero linear $H$-invariant functional on $V$. Equivalently, $\pi$ appears as a sub-representation of $C^\infty(G_n/H)$. Apart from standalone interest in the representation theory of $p$-adic groups, distinction comes into play as a local accessory when dealing with periods of an automorphic representation.
\par For such $(\pi,V)$, the dual $V^\ast$ can be embedded as a $G_n$-space into the space of distributions $\mathcal{D}$ on $G_n/H$. This observation allows one to push geometric results on the $p$-adic space $\mathcal{D}$ into representation theory. Among the advantages of such an approach, which has long been known for its potency, is the absence of reliance on a classification of the representations of $G_n$.
\par For one instance, the mentioned motivation from automorphic forms raises the question of multiplicity one, that is, can the $H$-invariants of $V^\ast$ be larger than one-dimensional? In the case of $H_{k,n}$ it was answered negatively by Jacquet and Rallis \cite{jac-ral}, using novel distributional methods. Namely, they applied a transfer of distributions from the symmetric space $G_n/H_{k,n}$ to its linear tangent space. The method was rephrased by Aizenbud and Gourevitch in \cite{aiz-gur1} as a part of a more elaborate mechanism called the Harish-Chandra descent. In this account, we would like to explore further these ideas to address another distributional problem with an implication for representation theory.
\par We focus on the mirabolic subgroup $\mathbf{P}_n(F)<G_n$, that is, the stabilizer subgroup in $G_n$ of the vector $(0,\ldots,0,1)$ in its natural action on the row space $F^n$. Our main result regards the collection of $H_{1,n}$-distinguished representations of $G_n$. 
\begin{thm}\label{start-thm}
Every $\mathbf{P}_n(F)\cap (\mathbf{GL}_1(F)\times \mathbf{GL}_{n-1}(F))$-invariant linear functional on a $\mathbf{GL}_1(F)\times \mathbf{GL}_{n-1}(F)$-distinguished irreducible smooth representation of $\mathbf{GL}_{n}(F)$, is also $\mathbf{GL}_1(F)\times \mathbf{GL}_{n-1}(F)$-invariant.
\end{thm}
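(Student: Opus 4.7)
The strategy is to adapt the distributional approach of Jacquet--Rallis \cite{jac-ral} and Aizenbud--Gourevitch \cite{aiz-gur1} to the non-reductive subgroup $P \cap H$ (where we abbreviate $H = H_{1,n}$). Observe that $P \cap H = \mathbf{GL}_1(F) \times \mathbf{P}_{n-1}(F)$, so that $H/(P \cap H) \cong F^{n-1} \setminus \{0\}$ as an $H$-space. By Frobenius reciprocity, the theorem is equivalent to showing that the natural inclusion $\operatorname{Hom}_H(\pi, \mathbb{C}) \hookrightarrow \operatorname{Hom}_H(\pi, C^\infty(F^{n-1} \setminus \{0\}))$ is an equality, i.e.\ that every $H$-equivariant map from $\pi$ into the smooth functions on the punctured $H$-space takes values in the constants.

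The first step is to convert this into a vanishing statement for invariant distributions. Using a relative Gelfand--Kazhdan style argument together with Bernstein's embedding of $\pi^*$ into distributions on a suitable quotient of $G$, the claim should reduce to showing that every distribution on $G$ which is left $(P \cap H)$-invariant and right $H$-invariant (with the appropriate modular character) is automatically left $H$-invariant.

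Next I would apply the Harish-Chandra descent of \cite{aiz-gur1} to transfer the question from the symmetric space $G/H$ to its tangent space $\mathfrak{s} \cong V \oplus V^*$ at the identity, where $V = F^{n-1}$. The $H$-action is by the standard and dual representations of $\mathbf{GL}_{n-1}(F)$ on the two summands and by inverse scalars via the $\mathbf{GL}_1(F)$ factor. Because the classical descent relies on reductivity of the subgroups involved, adapting it to $P \cap H$ requires new input: this is presumably where the ``distributional methods for non-reductive groups'' announced in the abstract enter, and I expect this adaptation to be the main technical obstacle.

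Finally, a Fourier/homogeneity argument further localises the problem onto the nilpotent cone $N = \{(v,w) \in V \oplus V^* : wv = 0\}$. For $k=1$ this cone is small: it decomposes into exactly four $H$-orbits --- the origin, the two ``axes'' $\{(v,0) : v \neq 0\}$ and $\{(0,w) : w \neq 0\}$, and the open orbit of pairs with both entries nonzero and mutually orthogonal. A stratum-by-stratum analysis, starting from the closed orbits and controlling the extension of candidate distributions across each stratum through its normal slice representation, should confirm that every distribution arising from the earlier steps is in fact $H$-invariant, yielding the theorem.
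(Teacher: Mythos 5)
Your roadmap agrees with the paper's in broad strokes: reduce the representation-theoretic statement to an equality of invariant distribution spaces, push that equality through a Harish-Chandra descent from the symmetric space $G/H_{1,n}$ to its tangent space $\mathbf{L}_{1,n}(F)\cong F^{n-1}\oplus(F^{n-1})^*$, and finally localise to the nilpotent cone $\mathcal{N}_{1,n}$. Your description of the nilpotent cone (four $H_{1,n}$-orbits for $n\ge 3$) and of the tangent-space action is also correct. However, there are two places where the sketch is too optimistic, and one of them is the heart of the matter.

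First, you flag the adaptation of Harish-Chandra descent to the non-reductive $P\cap H$ as ``the main technical obstacle'' but give no idea for how to overcome it. In the paper this is done via an abstract criterion (Proposition \ref{abstract-lem}) whose hypotheses are a finiteness statement and a vanishing statement for $P$-orbits on each closed $H$-orbit, and these are then verified through a careful product decomposition of the slice actions of $H_{1,n}^x$ and its intersection with the mirabolic (Propositions \ref{structure}, \ref{fin-orbit}, Corollary \ref{first-cor}). Without something of this kind, the reduction to the tangent space does not go through, since the usual descent genuinely uses reductivity of the stabilizers.

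Second, and more seriously, your final ``stratum-by-stratum analysis, starting from the closed orbits and controlling the extension of candidate distributions across each stratum'' glosses over the step that the paper singles out as delicate: one must actually \emph{construct} an $H_{1,n}$-invariant distribution $\nu$ on $\mathcal{N}_{1,n}$ with full support, i.e.\ prolong the invariant distribution from the open orbit to its closure invariantly. The orbit filtration alone gives only an upper bound on the dimension of $\mathcal{D}(\mathcal{N}_{1,n})^{P_{1,n}}$; to conclude equality with $\mathcal{D}(\mathcal{N}_{1,n})^{H_{1,n}}$ you must also exhibit enough $H$-invariant distributions, and neither the axes nor the intermediate strata carry invariant measures (their stabilizers in $H_{1,n}$ are not unimodular). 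So besides $\delta_0$ the only candidate is the prolongation $\nu$, whose existence is exactly the Ranga-Rao--type phenomenon that the paper points out can \emph{fail} in $p$-adic symmetric spaces (\cite[Section 4]{rr}) and is the obstruction for $k>1$. The paper gets around this for $k=1$ with the ``dense Frobenius descent'' (Proposition \ref{dense-frob}), which reduces the prolongation on $\mathcal{N}_{1,n}$ to the classical prolongation of $|x|^{2-n}$ from $F^\times$ to $F$ (Lemma \ref{thereis-dist-gen}). Your proposal does not contain this idea, and without it the final step is a genuine gap rather than a routine verification.
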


In particular, this implies multiplicity one for $\mathbf{P}_n(F)\cap H_{1,n}$-invariant functionals on $H_{1,n}$-distinguished representations.
\par Let us mention, that from the results of \cite{venke} some explicit constructions of $H_{1,n}$-distinguished representations can be produced, for which Theorem \ref{start-thm} may be applied. For example, if $\sigma$ is a smooth irreducible representation of $G_2$ with trivial central character and $\pi = \sigma \times 1_{n-2}$ (in the sense of the Bernstein-Zelevinsky classification, where $1_{n-2}$ is the trivial representation of $G_{n-2}$) is irreducible, then $\pi$ is $H_{1,n}$-distinguished. Thus, Theorem \ref{start-thm} concerns what can be roughly described as embeddings of the smooth spectrum of $\mathbf{PGL}_2(F)$ into that of $G_n$. On the other hand, it follows from the results of \cite{matr-linear}, that if $n>3$, any $H_{1,n}$-distinguished $\pi$ must be non-generic.
\par Theorem \ref{start-thm} comes as a corollary of our study into the geometric question about the difference between $P_n(F)\cap H_{k,n}$-invariance and $H_{k,n}$-invariance of distributions on $G_n/X_{k,n}$. We show (Theorem \ref{final-form}) that for all $1\leq k<n-2$ the question can be reduced to that of invariant distributions on a certain cone in a linear $F$-space. Thus, we complete the following reduction.

\begin{lem}\label{start-lem}
Let $1\leq k\leq n-2$ be integers. \\
For $m\leq l$, let $\mathcal{N}_{m,l}$ be the space of nilpotent matrices of the form 
$$\left(\begin{array}{cc} 0 & B \\ C & 0 \end{array} \right)\;\quad\; B\in \mathbf{M}_{m,l-m}(F)\quad C\in \mathbf{M}_{l-m,m}(F).$$
Suppose that for all $1\leq m\leq k$ and $m\leq l \leq n$,  every $\mathbf{P}_l(F)\cap (\mathbf{GL}_m(F)\times \mathbf{GL}_{l-m}(F))$-conjugation invariant distribution on $\mathcal{N}_{m,l}$ is also $\mathbf{GL}_m(F)\times \mathbf{GL}_{l-m}(F)$-invariant. Then, every $\mathbf{P}_n(F)\cap (\mathbf{GL}_k(F)\times \mathbf{GL}_{n-k}(F))$-invariant linear functional on a $\mathbf{GL}_k(F)\times \mathbf{GL}_{n-k}(F)$-distinguished irreducible smooth representation of $\mathbf{GL}_{n}(F)$, is also $\mathbf{GL}_k(F)\times \mathbf{GL}_{n-k}(F)$-invariant.
\end{lem}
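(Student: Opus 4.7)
The plan is to translate the representation-theoretic statement into an assertion about invariant distributions on the symmetric space $X_{k,n} = G_n/H_{k,n}$, and then apply a Harish-Chandra descent argument in the spirit of \cite{jac-ral, aiz-gur1}, adapted to the non-reductive mirabolic setting.

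First, fix a non-zero $H_{k,n}$-invariant functional $\ell_0$ on the given distinguished representation $(\pi, V)$. The matrix-coefficient map $v \mapsto \bigl(g \mapsto \ell_0(\pi(g^{-1})v)\bigr)$ realizes $V$ as a $G_n$-submodule of $C^\infty(X_{k,n})$, so $V^*$ appears equivariantly among distributions on $X_{k,n}$. Any $(P_n \cap H_{k,n})$-invariant functional on $V$ then produces a distribution on $X_{k,n}$ invariant under the left action of $P_n \cap H_{k,n}$, and the goal becomes to show that it must be $H_{k,n}$-invariant as well. It suffices to prove the stronger purely geometric statement that every such $(P_n \cap H_{k,n})$-invariant distribution on $X_{k,n}$ is automatically $H_{k,n}$-invariant.

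Next, via a Cayley-type transform, I would transfer the problem to the tangent space at the base point, which is the $(-1)$-eigenspace of the involution defining $H_{k,n}$, consisting of block matrices of the form $\begin{pmatrix} 0 & B \\ C & 0 \end{pmatrix}$ with $B \in \mathbf{M}_{k,n-k}(F)$ and $C \in \mathbf{M}_{n-k,k}(F)$; both actions become conjugation. A semisimple-orbit-closure induction then reduces the question to local invariance near each semisimple element $s$ of this tangent space. Via a slice theorem, invariance near $s$ reduces to invariance on the nilpotent cone of the slice, now under the centralizers $C_{H_{k,n}}(s)$ and $C_{P_n \cap H_{k,n}}(s)$. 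A direct computation shows that $C_{H_{k,n}}(s)$ is a product of factors of the form $\mathbf{GL}_m \times \mathbf{GL}_{l-m}$ with $m \leq k$ and $l \leq n$, and the corresponding nilpotent cone is a product of copies of $\mathcal{N}_{m,l}$. Moreover, the centralizer of $s$ in $P_n \cap H_{k,n}$ contains the analogous mirabolic intersections $P_l \cap (\mathbf{GL}_m \times \mathbf{GL}_{l-m})$, so the hypothesis of the lemma supplies exactly the invariance input needed on each slice.

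The main obstacle is that the mirabolic $P_n$ is not reductive, so the Harish-Chandra descent of \cite{aiz-gur1} cannot be invoked directly. The crux is therefore to develop versions of Bernstein's localization principle, the Luna-type slice theorem, and Shalika-germ-style support reductions that remain valid when one of the acting groups is non-reductive — this is the new distributional technology advertised in the abstract. A secondary, more combinatorial, point is verifying that the centralizers arising in the descent produce exactly the pairs $(m, l)$ with $1 \leq m \leq k$ and $m \leq l \leq n$ prescribed in the hypothesis, so that no stronger input is required.
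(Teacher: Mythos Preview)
Your outline has the right architecture---embed $V^\ast$ in $\mathcal{D}(X_{k,n})$, then descend to slices and reduce to nilpotent cones---but two points are genuinely wrong or missing and would block the argument.

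First, the centralizer structure is not what you claim. For a semisimple $s$, the centralizer $C_{H_{k,n}}(s)$ is \emph{not} simply a product of groups of the form $\mathbf{GL}_m\times\mathbf{GL}_{l-m}$. The eigenvalue decomposition of $s$ produces, in addition to at most two factors of the type $(H_{m,l},\mathbf{L}_{m,l}(F))$, a collection of factors of the form $(\mathbf{GL}_l(E),\mathbf{M}_{l,l}(E))$ for finite extensions $E/F$ (arising from eigenvalues $\neq\pm 1$, either paired with their inverses or generating a quadratic extension). On those factors the hypothesis of the lemma says nothing: you need Bernstein's theorem that every $\mathbf{P}_l(E)$-conjugation-invariant distribution on $\mathbf{M}_{l,l}(E)$ is $\mathbf{GL}_l(E)$-invariant as a separate, essential input. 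Without it the induction on slices does not close.

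Second, the mechanism for passing from $H_{k,n}$-invariance data on a slice to equality of $P_{k,n}$- and $H_{k,n}$-invariants on the Luna neighborhood is not a ``non-reductive slice theorem''. The slice theorem is applied only to the reductive $H_{k,n}$; what is needed on top is a Frobenius-descent criterion comparing $P$- and $H$-invariance along the retract $p:U_\mathcal{O}\to\mathcal{O}$. A closed $H_{k,n}$-orbit $\mathcal{O}$ breaks into finitely many $P_{k,n}$-orbits, and the argument requires two checks: on the unique open $P_{k,n}$-orbit one uses the slice hypothesis (your condition), but on each \emph{non-open} $P_{k,n}$-orbit one must show there are no nonzero $(P_{k,n}^y,\Delta_{P_{k,n}/P_{k,n}^y}^{-1})$-equivariant distributions on the fiber. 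This vanishing comes from a modular-character computation: on the factor of $P_{k,n}^y$ where the mirabolic condition is vacuous, scalar matrices act trivially on the slice but nontrivially via $\Delta_{P_{k,n}/P_{k,n}^y}$. Your sketch omits this step entirely, and without it a $P_{k,n}$-invariant distribution supported over the non-open $P_{k,n}$-strata of $\mathcal{O}$ cannot be ruled out.
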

The role of the mirabolic subgroup is known to be ubiquitous in the representation theory of $G_n$. For example, the subgroup is seen in the Gelfand-Kazhdan theory of derivatives, which served as a foundation for the Zelevinsky classification. In \cite{bern}, Bernstein showed a distributional result regarding this subgroup: Every $\mathbf{P}_n(F)$-conjugation-invariant distribution on the matrix space $\mathbf{M}_{n,n}(F)$ is also $G_n$-conjugation-invariant. Using this result, he showed that integration over the mirabolic group in the Whittaker model of generic representations defines a canonical inner product between a representation and its contragredient. Our current study can be seen as a follow up to these findings. We would like to check to what extent the role of the mirabolic group is preserved in the relative setting. 
\par In the context of distinction, Bernstein's distributional result was applied in \cite{off} to prove a similar statement to Theorem \ref{start-thm} for $\mathbf{GL}_n(L)$-distinguished representations, where $F/L$ is a quadratic field extension. In our case, i.e.~$H_{k,n}$-distinction, we show that the geometry of the space requires a similar approach to be supplemented with the assumption of Lemma \ref{start-lem}, that is, $\mathcal{D}(\mathcal{N}_{k,n})^{\mathbf{P}_n(F)\cap H_{k,n}} = \mathcal{D}(\mathcal{N}_{k,n})^{H_{k,n}}$.  The relative analogy to Bernstein's result facilitates the conjecture that the assumption indeed holds (for all $1\leq k\leq n-2$). We were successful in proving it for the case $k=1$ by techniques of prolongation of invariant distributions.
\par Let us expound further on this. In our proof we are able to prolong $H_{1,n}$-invariant distributions on $\mathcal{N}_{1,n}$ from one open orbit to its closure. Yet, in \cite[Section 4]{rr}, an example of a similar case was shown in which a so-called Ranga-Rao type theorem does \textit{not} hold. Namely, if $G/H$ is a $p$-adic symmetric space, the $H$-invariant distributions on the nilpotent orbits of $Lie(G)\ominus Lie(H)$ may not possess such prolongation properties. Indeed, the validity of such a property in our case for $k>1$ remains one of the obstacles when dealing with the general linear nilpotent problem (the assumption of Lemma \ref{start-lem}). 
\par Section 2 sets the main distributional tools needed for our analysis. We recall the Frobenius descent (Proposition \ref{frob}) which allows for the most direct transfer of a distribution into a smaller space. Since the mirabolic group is not reductive, we are unable to apply ``traditional" distributional techniques directly. For that reason we introduce a new refinement (Proposition \ref{abstract-lem}) of the descent techniques, which eases the treatment of invariance under general $p$-adic groups. We also sketch the treatment of Luna's Slice Theorem in the setting of symmetric spaces which was developed in \cite{aiz-gur1}.
\par In addition, we give a formulation (Proposition \ref{dense-frob}) of what we call the dense Frobenius descent. Building upon ideas from \cite{bern}, it will serve us as a tool for reducing problems of invariant prolongation of distributions into same problems on smaller spaces.
\par Section 3 studies the geometry of closed $H_{k,n}$-orbits on the space $G_n/H_{k,n}$ and their decomposition to $\mathbf{P}_n(F)\cap H_{k,n}$-orbits. Using the mentioned Harish-Chandra descent techniques we are able to reduce a question on the distribution spaces of $G_n/H_{k,n}$ to that of distributions on its tangent space (Theorem \ref{main-reduction}). Furthermore, we show that the ``heart" of the problem lies in distributions on the nilpotent cones $\mathcal{N}_{k,n}$.
\par In Section 4, using the dense Frobenius descent, we construct the full space of $H_{1,n}$-invariant distributions on $\mathcal{N}_{1,n}$. It is shown that these are also invariant under the smaller $\mathbf{P}_n(F)\cap H_{1,n}$ (Theorem \ref{k1-thm}). That will allow the final deduction of Theorem \ref{start-thm} from Lemma \ref{start-lem}.
\par For completeness, we give in Section 5 the details of the embedding of the linear forms on a $H_{k,n}$-distinguished irreducible representation of $G_n$ into the space of distributions on $G_n/H_{k,n}$. That description is rather classical, and completes our transfer of distributional results into representation theory.
\par The results reported in this account are part of my Ph.D. research. I would like to thank Omer Offen for suggesting me this problem and providing guidance. Special thanks to Dima Gourevitch and Rami Aizenbud for useful discussions and suggestions.

\section{Tools and preliminaries}
\subsection{Notation}
Let $F$ be a non-archimedean local field of characteristic $0$. We will write $\mathbf{V}(F)$ for the $F$-points of an algebraic variety $\mathbf{V}$ defined over $F$. Specifically, we fix the notation $G_n = \mathbf{GL}_n(F)$ and also denote the naturally embedded subgroups $H_{k,n} = G_k\times G_{n-k}<G_n$, for $0\leq k< n$.
\par Let $\theta_{k,n}: \mathbf{GL}_n\to \mathbf{GL}_n$ be the involutive automorphism defined by 
$$\theta_{k,n}(g) = \epsilon_{k,n} g\epsilon_{k,n}\qquad \epsilon_{k,n}=\left(\begin{array}{ll} I_k & 0 \\ 0 & -I_{n-k}\end{array}\right).$$
Then, the fixed point subgroup of $\theta_{k,n}$ is exactly $\mathbf{GL}_k\times \mathbf{GL}_{n-k}$, which makes the quotient $\mathbf{GL}_n/(\mathbf{GL}_k\times \mathbf{GL}_{n-k})$ a \textit{symmetric space}. For our current needs this will allow the following construction: Consider the left action of $G_n$ on itself by $\theta_{k,n}$-twisted conjugation: $g\cdot x := gx\theta_{k,n}(g)^{-1}$. The stabilizer of the identity element will be the fixed point subgroup of $\theta_{k,n}$ inside $G_{n}$, i.e.~$H_{k,n}$. Thus, the orbit of the identity relative to this action
$$X_{k,n} = \left\{ g\theta_{k,n}(g)^{-1}\;:\; g\in G_n\right\}\subset G_n$$
is identified with the quotient $G_n/H_{k,n}$. We also denote the symmetrization map $\rho_{k,n}(g) = g\theta_{k,n}(g)^{-1} = g\cdot I_n$ from $G_n$ to $X_{k,n}$.
Notice that the action of $H_{k,n}$ on $X_{k,n}$ is given by usual conjugation, and therefore the stabilizer group inside $H_{k,n}$ of a point $x\in X_{k,n}$ is the centralizer of $x$ which we will denote by $H^x_{k,n}$. A similar notation for a centralizer will be adopted for other groups as well. 
\par Observe that on the Lie algebra of $\mathbf{GL}_n$ there is a linear involution $d\theta_{k,n} = Ad(\epsilon_{k,n})$. After identifying the algebra with the matrix space $\mathbf{M}_{n,n}$ it becomes useful to define the linear version of the symmetric space
$$\mathbf{L}_{k,n} = \left\{ A\in \mathbf{M}_{n,n}\;:\; Ad(\epsilon_{k,n})(A) = -A\right\} =$$ $$= \left\{\left(\begin{array}{cc} 0 & B \\ C & 0 \end{array} \right)\;:\; B\in \mathbf{M}_{k,n-k}, C\in \mathbf{M}_{n-k,k}\right\}.$$

\par Consider the natural right action of $\mathbf{GL}_n$ on the row vector space $\mathbb{A}^n$. Let $\mathbf{P}_n< \mathbf{GL}_n$ be the stabilizer group of the vector $(0,\ldots,0,1)$. This is the standard mirabolic group which clearly consists of matrices whose bottom row is $(0 \cdots 0 1)$. We also denote the intersection $P_{k,n} = H_{k,n} \cap \mathbf{P}_n(F)$.

\par For a general locally compact totally disconnected (lctd) space $X$ (such as the $F$-points of an algebraic group), we write $\mathcal{S}(X)$ for the space of locally constant compactly supported complex functions on $X$. The space $\mathcal{D}(X)$ of distributions on $X$ is defined to be the dual space of $\mathcal{S}(X)$. Recall that given an open subset $\Omega$ of such a space $X$, $\Omega$ and $X\setminus \Omega$ become lctd spaces themselves, and we have a short exact sequence
$$0\longrightarrow \mathcal{D}(X\setminus \Omega)\longrightarrow \mathcal{D}(X)\longrightarrow \mathcal{D}(\Omega)\longrightarrow 0$$
Thus, for $T\in \mathcal{D}(X)$ we can talk about its restriction $T|_{\Omega}$, or say it is \textit{supported} in $X\setminus \Omega$. The support $supp\, T$ of a distribution $T$ is the complement of the union of all open sets $\Omega$ for which $T|_{\Omega}=0$.
\par A continuous left action of a lctd group $G$ on a lctd space $X$ induces a left linear action of $G$ on $\mathcal{S}(X)$ by $(g\cdot f)(x) = f(g^{-1}\cdot x)$ ($g\in G, f\in \mathcal{S}(X), x\in X$). Hence, $G$ acts on $\mathcal{D}(X)$ by the dual action.
\par In general, for a group $G$ that acts linearly on a complex space $V$ and a group character $\chi:G\to \mathbb{C}$, we will use the notation $V^{G,\chi} = \{v\in V\;:\; g\cdot v = \chi(g)v\;\forall g\in G\}$ for the space of semi-invariants. As usual $V^G = V^{G,1}$. Given an involution $\sigma$ on any vector space $X$, we will denote by $X^{\widehat{\sigma}}= \{x\in X\;:\; \sigma(x)=-x\}$ the space of anti-invariants of $X$. E.g.~$\mathbf{L}_{k,n}(K) = \mathbf{M}_{n,n}(K)^{\widehat{Ad(\epsilon_{k,n})}}$ for any field $K$.
\par We will denote by $\Delta_G$ the modular character attached to a lctd group $G$, and given a closed subgroup $H<G$ we define the relative modular character $\Delta_{G/H} = \Delta_G|_H \Delta_H^{-1}$ of $H$.
\par Recall that the modular character of $\mathbf{P}_n(F)$ is given by $|\det^{-1}|$, where $|\cdot|$ is the standard absolute value of $F$. Note also, that the same formula remains true for the group $\mathbf{P}_n(E)$, where $F<E$ is a finite field extension, if the determinant of a matrix is taken as an $F$-linear operator.

\subsection{Frobenius Descent}
A major focal point of this work is the analysis of certain spaces of invariant distributions. One valuable tool available to deal with these spaces is the so-called Frobenius descent. It is a well-known rephrasing of the Frobenius reciprocity of group representations. After recalling the original formulation of the descent technique, we develop some adaptations of the tool to our needs.
\begin{prop}\label{frob}\cite[Lemma 1.5]{bern}
Suppose $G$ is a lctd group acting continuously on lctd spaces $\mathcal{X}$ and $\mathcal{O}$, with a $G$-equivariant continuous map $p:\mathcal{X}\to \mathcal{O}$ between them. Suppose furthermore, that the action on $\mathcal{O}$ is transitive. For a chosen point $x\in \mathcal{O}$ let $H<G$ be the stabilizer group of $x$. Then, for any character $\chi:G\to \mathbb{C}$ there is an isomorphism of distribution spaces
$$\mathcal{D}(p^{-1}(x))^{H,\chi \Delta_{G/H}^{-1}} \cong \mathcal{D}(\mathcal{X})^{G,\chi}.$$ 
If a distribution $T'\in\mathcal{D}(p^{-1}(x))^{H,\chi \Delta_{G/H}^{-1}}$ corresponds to $T\in  \mathcal{D}(\mathcal{X})^{G,\chi}$, we have the equality $supp\, T = G\cdot supp\,T'$.
\end{prop}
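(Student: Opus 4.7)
The plan is to realize the stated isomorphism through the standard machinery of local trivialization of the $G$-equivariant map $p$, together with the formalism of Frobenius reciprocity. Since the action on $\mathcal{O}$ is transitive with stabilizer $H$, identify $\mathcal{O}$ with $G/H$ via $gH\mapsto g\cdot x$, so that $p$ becomes a $G$-equivariant fibration with typical fiber $p^{-1}(x)$ on which $H$ acts by the restricted action. The map $T\mapsto T'$ will be, roughly, a restriction to the fiber over $x$, and the modular twist will arise from the requisite measure normalizations.

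First I would use the fact that the projection $G\to G/H$ of lctd groups admits a continuous section over a sufficiently small open neighborhood $U\ni x$, obtained via a compact open subgroup of $G$. Such a section $s:U\to G$ with $s(y)\cdot x=y$ produces a homeomorphism $(y,z)\mapsto s(y)\cdot z$ from $U\times p^{-1}(x)$ onto $p^{-1}(U)$ and a corresponding linear isomorphism $\mathcal{S}(U)\otimes\mathcal{S}(p^{-1}(x))\cong\mathcal{S}(p^{-1}(U))$. For $T\in\mathcal{D}(\mathcal{X})^{G,\chi}$, I would fix a test function $\mu\in\mathcal{S}(U)$ with total mass $1$ and set $T'(\phi):=T(\mu\otimes\phi)$; independence from the choice of $\mu$ is forced by the $(G,\chi)$-equivariance of $T$, which pins down $T|_{p^{-1}(U)}$ as a tensor product of a distribution on $p^{-1}(x)$ with a $(G,\chi)$-equivariant section of densities on $U$. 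Verifying $(H,\chi\Delta_{G/H}^{-1})$-equivariance of $T'$ then amounts to comparing, for $h\in H$, the left action of $h$ on $p^{-1}(x)$ with the rewriting of $s(y)h$ through $s$, a change of variables on $G/H$ that introduces exactly the factor $\Delta_{G/H}$.

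For the inverse direction, I would patch together $G$-translates of the local construction: every $z\in\mathcal{X}$ sits in some $g\cdot p^{-1}(U)$, so starting from $T'$ one defines $T$ locally by the reverse of the above formula on each such open and checks compatibility on overlaps using the $(H,\chi\Delta_{G/H}^{-1})$-equivariance of $T'$, assembling a global distribution via a partition of unity. That the two maps are mutually inverse is tautological from the construction. The support statement $supp\,T=G\cdot supp\,T'$ is immediate from the local nature of the recipe, since $T|_{p^{-1}(U)}$ is determined by $T'$ on the fiber over $x$ and then spread equivariantly. The main technical obstacle lies entirely in the bookkeeping of densities and in carefully derivating the $\Delta_{G/H}^{-1}$ twist from the non-unimodularity of the coset space $G/H$; this is standard but must be traced attentively to land on the correct character on the fiber side.
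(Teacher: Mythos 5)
The paper does not prove this proposition at all; it is quoted verbatim from Bernstein's article, as the citation \cite[Lemma~1.5]{bern} in the statement indicates. So there is no proof in the paper to compare against, and the only question is whether your sketch is a valid way to establish the result.

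Your local-trivialization strategy is the standard alternative to Bernstein's fibered-product argument (where one works on $G\times p^{-1}(x)$ and quotients by the diagonal $H$-action). It is a correct route. The one place where your sketch asserts rather than argues is precisely the load-bearing step: the claim that ``independence from the choice of $\mu$ is forced by the $(G,\chi)$-equivariance of $T$, which pins down $T|_{p^{-1}(U)}$ as a tensor product.'' This is not automatic, because under the trivialization $(y,z)\mapsto s(y)\cdot z$ the action of a small compact open subgroup $K$ on $U\times p^{-1}(x)$ is not the product action: one gets $k\cdot(u,z)=(k\cdot u,\,c(k,u)\cdot z)$ with the cocycle $c(k,u)=s(k\cdot u)^{-1}k\,s(u)\in H\cap K$. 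Thus $K$-invariance of $T$ does not immediately say $T$ is a tensor. What is true --- and is what you need to invoke --- is the compact-group version of Frobenius descent on $p^{-1}(U)$: since $K$ is compact (and one may shrink $U$ so that $\chi|_K=1$), averaging over $K$ shows that every $K$-invariant distribution on $p^{-1}(U)$ is the image of a unique $(H\cap K)$-invariant distribution on $p^{-1}(x)$ under a ``Haar on $U$ tensor ---'' map, which is exactly the $\mu$-independence you need. So your reduction is sound, but it would be worth naming this compact-group step explicitly, since it is the core of the argument rather than bookkeeping; once it is in place, the $\Delta_{G/H}^{-1}$ twist drops out of the change-of-variables formula for $s(y)h$ just as you describe, and the gluing over $G$-translates of $p^{-1}(U)$ and the support identity $\mathrm{supp}\,T=G\cdot\mathrm{supp}\,T'$ follow directly.
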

We further mention that if $\Delta_{G/H}$ is trivial (with the assumptions of the above proposition), then $\mathcal{O}$ has a non-zero $G$-invariant distribution $\mu$ on it. In that case the Frobenius descent map $\Phi_\mu: \mathcal{D}(p^{-1}(x))^{H} \to \mathcal{D}(\mathcal{X})^{G}$ can be given by the formula
$$(\Phi_\mu(T))(f)= \int_\mathcal{O} T((g_z\cdot f)|_{p^{-1}(x)})\,d\mu(z)\qquad f\in\mathcal{S}(\mathcal{X})$$
where $g_z\in G$ are fixed elements such that $g_z\cdot z = x$. 

\par Given a group $G$ acting on $X$ and its subgroup $P$, we want to formulate a criterion for the equality of spaces $\mathcal{D}(X)^G$ and $ \mathcal{D}(X)^P$ based on the Frobenius descent. 

\begin{prop}\label{abstract-lem}
Suppose $G$ is an lctd group, and $P<G$ is a closed subgroup. Suppose $\mathcal{X}$ and $\mathcal{O}$ are lctd spaces on which $G$ acts, and $p:\mathcal{X}\to \mathcal{O}$ is an $G$-equivariant surjection. Assume: \\
(i) The action on $\mathcal{O}$ is $G$-transitive. \\
(ii) There is a non-zero $G$-invariant distribution on $\mathcal{O}$.\\
(iii) There are finitely many $P$-orbits in $\mathcal{O}$.\\ 
If there exists an element $x\in \mathcal{O}$ and an open $P$-orbit $\mathcal{U}\subset \mathcal{O}$ containing $x$ such that:\\
(iv) $\mathcal{D}(p^{-1}(x))^{Stab(P,x)} = \mathcal{D}(p^{-1}(x))^{Stab(G,x)}$.\\
(v) For every $y\in \mathcal{O}\setminus \mathcal{U}$, $\mathcal{D}(p^{-1}(y))^{Stab(P,y),\, \Delta_{P/Stab(P,y)}^{-1}}=\{0\}$.\\
then 
$$\mathcal{D}(\mathcal{X})^{P} = \mathcal{D}(\mathcal{X})^G.$$
\end{prop}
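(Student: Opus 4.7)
The inclusion $\mathcal{D}(\mathcal{X})^G \subseteq \mathcal{D}(\mathcal{X})^P$ is immediate, so the plan is to establish the reverse. The strategy is twofold: first, show that a $P$-invariant distribution on $\mathcal{X}$ is uniquely determined by its restriction to the open subset $p^{-1}(\mathcal{U})$; second, show that every such restriction already comes from a $G$-invariant distribution on $\mathcal{X}$.

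For the first part, I would enumerate the finitely many $P$-orbits of $\mathcal{O}\setminus\mathcal{U}$ (guaranteed by (iii)) as $\mathcal{O}_1,\ldots,\mathcal{O}_r$, ordered so that each $\mathcal{O}_j$ is open in $Z_j:=\bigcup_{i\geq j}\mathcal{O}_i$. By downward induction on $j$, the $P$-equivariant short exact sequence
\[
0 \to \mathcal{D}(p^{-1}(Z_{j+1})) \to \mathcal{D}(p^{-1}(Z_j)) \to \mathcal{D}(p^{-1}(\mathcal{O}_j)) \to 0,
\]
together with left exactness of the functor $(-)^P$ and Proposition \ref{frob} applied to the transitive $P$-action on $\mathcal{O}_j$, reduces the vanishing of $\mathcal{D}(p^{-1}(Z_j))^P$ to that of $\mathcal{D}(p^{-1}(y_j))^{Stab(P,y_j),\,\Delta_{P/Stab(P,y_j)}^{-1}}$, which is supplied by (v). In particular $\mathcal{D}(p^{-1}(\mathcal{O}\setminus\mathcal{U}))^P=0$, so the restriction map $\mathcal{D}(\mathcal{X})^P\hookrightarrow\mathcal{D}(p^{-1}(\mathcal{U}))^P$ is injective.

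For the second part, assumption (ii) produces a non-zero $G$-invariant measure $\mu$ on $\mathcal{O}$, whose restriction to the open $P$-orbit $\mathcal{U}$ is a non-zero $P$-invariant measure, so $\Delta_{P/Stab(P,x)}$ is trivial; Proposition \ref{frob} then yields $\mathcal{D}(p^{-1}(\mathcal{U}))^P\cong\mathcal{D}(p^{-1}(x))^{Stab(P,x)}$. Applied to $\mathcal{X}\to\mathcal{O}$ under $G$, the same proposition gives $\mathcal{D}(\mathcal{X})^G\cong\mathcal{D}(p^{-1}(x))^{Stab(G,x)}$, and (iv) identifies the two target spaces. A direct inspection of the integral formula $(\Phi_\mu T_x)(f)=\int_{\mathcal{O}} T_x((g_z\cdot f)|_{p^{-1}(x)})\,d\mu(z)$ shows that, when $f$ is supported in $p^{-1}(\mathcal{U})$, the integrand vanishes for $z\notin\mathcal{U}$ and the representatives $g_z$ may be taken in $P$, so restriction from $\mathcal{D}(\mathcal{X})^G$ to $\mathcal{D}(p^{-1}(\mathcal{U}))^P$ corresponds under these isomorphisms precisely to the inclusion $\mathcal{D}(p^{-1}(x))^{Stab(G,x)}\hookrightarrow\mathcal{D}(p^{-1}(x))^{Stab(P,x)}$. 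Given $T\in\mathcal{D}(\mathcal{X})^P$, let $T_x\in\mathcal{D}(p^{-1}(x))^{Stab(P,x)}$ be the $P$-Frobenius correspondent of $T|_{p^{-1}(\mathcal{U})}$; by (iv), $T_x\in\mathcal{D}(p^{-1}(x))^{Stab(G,x)}$, which yields some $\tilde T\in\mathcal{D}(\mathcal{X})^G$ with $\tilde T|_{p^{-1}(\mathcal{U})}=T|_{p^{-1}(\mathcal{U})}$. The injectivity from the first part forces $T=\tilde T\in\mathcal{D}(\mathcal{X})^G$.

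The step I expect to demand the most care is the commutativity check relating the $G$- and $P$-Frobenius descents on the open orbit $\mathcal{U}$: it is precisely this naturality that allows assumption (iv), a statement at the single fiber $p^{-1}(x)$, to promote $P$-invariance to $G$-invariance globally on $\mathcal{X}$.
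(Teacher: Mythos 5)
Your proposal is correct and follows essentially the same route as the paper: both arguments compare the $P$- and $G$-Frobenius descents over the open orbit $\mathcal{U}$ via the explicit integral formula for $\Phi_\mu$, use (iv) to identify the corresponding fiber distributions, and dispose of $p^{-1}(\mathcal{O}\setminus\mathcal{U})$ by inducting over the finitely many remaining $P$-orbits with (v). The only difference is organizational (you first prove injectivity of restriction to $p^{-1}(\mathcal{U})$ and then lift, whereas the paper first produces $S\in\mathcal{D}(\mathcal{X})^G$ and then shows $T-S$ vanishes on the bad set), which is a cosmetic reordering of the same steps.
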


\begin{proof}
Let $T\in \mathcal{D}(\mathcal{X})^{P}$ be a distribution. If $0\neq \mu$ is an $G$-invariant distribution on $\mathcal{O}$, then $0\neq \mu|_{\mathcal{U}}$ is $P$-invariant, and $\Delta_{P/Stab(P,x)}=1$ follows. 
Thus, we can use Proposition \ref{frob} on the mapping $p: p^{-1}(\mathcal{U})\to \mathcal{U}$ to produce $\xi\in \mathcal{D}(p^{-1}(x))^{Stab(P,x)}$ corresponding to $T|_{p^{-1}(\mathcal{U})}$. 
From condition (iv) we know that $\xi$ is also $Stab(G,x)$-invariant, hence, corresponds by another Frobenius descent to an $G$-invariant distribution $S$ on $\mathcal{X}$. Yet, looking on both descents through the formula given above we see that $S|_{p^{-1}(\mathcal{U})} = T|_{p^{-1}(\mathcal{U})}$, which means $T-S$ has its support in $p^{-1}(\mathcal{O}\setminus \mathcal{U})$. Therefore, it is enough to prove that $\mathcal{D}(p^{-1}(\mathcal{O}\setminus \mathcal{U}))^P$ is trivial to show that $T=S$ and finish.\\
This is achieved by induction on the number of $P$-orbits in $\mathcal{O}\setminus \mathcal{U}$. Since this is a finite number, there is an open $P$-orbit $\mathcal{U}_1\subset \mathcal{O}\setminus \mathcal{U}$. Choosing $y\in \mathcal{U}_1$ and combining condition (v) with a Frobenius descent, we see that there are no non-zero $P$-invariant distributions on $p^{-1}(\mathcal{U}_1)$. Thus $\mathcal{D}(p^{-1}(\mathcal{O}\setminus \mathcal{U}))^P =\mathcal{D}(p^{-1}(\mathcal{O}\setminus (\mathcal{U}\cup \mathcal{U}_1)))^P$, and so on.
\end{proof}

\par Another problem we can tackle with descent techniques is the prolongation of invariant distributions. Given a distribution on a locally closed subset $Y$ of a larger space we would like to claim that it is the restriction of a distribution on $\overline{Y}$ with the same invariance properties. The following proposition allows us under suitable conditions to reduce such a question to a smaller space. This argument has appeared implicitly in \cite[4.3]{bern}.
\begin{prop}[``Dense Frobenius descent"]\label{dense-frob}
Let $G$ be a lctd group that acts on a lctd space $\mathcal{X}$. Let $\mathcal{K}$ be a compact totally disconnected space on which $G$ acts transitively.
Suppose $\mathcal{Y}\subset \mathcal{X}$ is an $G$-invariant subset equipped with an $G$-equivariant surjective continuous map $i:\mathcal{Y}\to \mathcal{K}$.\\
For a point $y\in \mathcal{K}$, denote $\mathcal{Y}_y= i^{-1}(y)$ and $B= Stab(y)< G$. If there exists a distribution $T\in\mathcal{D}(\mathcal{X})^{B,\Delta_{G/B}^{-1}}$ with $supp\, T = \overline{\mathcal{Y}_y}$, then there also exists a distribution $T'\in \mathcal{D}(\mathcal{X})^G$ with $supp\, T' = \overline{\mathcal{Y}}$.
\end{prop}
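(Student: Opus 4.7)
The strategy is to upgrade $T$ to a genuinely $G$-invariant distribution on an auxiliary product space, then transfer it back to $\mathcal{X}$ by a proper pushforward made possible by the compactness of $\mathcal{K}$. Concretely, I would form $\mathcal{X}\times\mathcal{K}$ with the diagonal $G$-action, together with the $G$-equivariant projection $q:\mathcal{X}\times\mathcal{K}\to\mathcal{K}$, and view $T$ as a distribution on the fiber $q^{-1}(y) \simeq \mathcal{X}$.

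Since $G$ acts transitively on $\mathcal{K}$ with stabilizer $B$ at $y$, Proposition \ref{frob} (with $\chi=1$) furnishes an isomorphism
$$\mathcal{D}(\mathcal{X})^{B,\,\Delta_{G/B}^{-1}} \;\cong\; \mathcal{D}(\mathcal{X}\times\mathcal{K})^{G},$$
so $T$ lifts to a $G$-invariant $\tilde T \in \mathcal{D}(\mathcal{X}\times\mathcal{K})^{G}$ satisfying $\mathrm{supp}\,\tilde T = G\cdot(\overline{\mathcal{Y}_y}\times\{y\})$. Compactness of $\mathcal{K}$ makes the first projection $\pi:\mathcal{X}\times\mathcal{K}\to\mathcal{X}$ proper and $G$-equivariant; the pullback $\pi^{*}f(x,k):=f(x)$ sends $\mathcal{S}(\mathcal{X})$ into $\mathcal{S}(\mathcal{X}\times\mathcal{K})$, so $T'(f):=\tilde T(\pi^{*}f)$ defines a $G$-invariant distribution on $\mathcal{X}$. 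Properness yields $\mathrm{supp}\,T' \subseteq \pi(\mathrm{supp}\,\tilde T) = G\cdot\overline{\mathcal{Y}_y}$, and a point-set check — $G\cdot\overline{\mathcal{Y}_y}$ is closed, $G$-invariant, and contains $\mathcal{Y}=G\cdot\mathcal{Y}_y$, while $\overline{\mathcal{Y}}$ is the minimal such set — gives $G\cdot\overline{\mathcal{Y}_y} = \overline{\mathcal{Y}}$.

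The remaining and most delicate task is to rule out that $\mathrm{supp}\,T' \subsetneq \overline{\mathcal{Y}}$ through cancellation in the pushforward. For this I would unwind the Frobenius formula: for $f\in\mathcal{S}(\mathcal{X})$ supported in a small open $U$ around some $x_0 \in \mathcal{Y}_y$, the value $T'(f)$ is an integral over $z\in\mathcal{K}$ of translates $(g_z^{-1}\cdot T)(f)$, each nonzero only when $U$ meets $\overline{\mathcal{Y}_z}$. Using total disconnectedness of $\mathcal{K}$, one can decompose the integration against characteristic functions of clopen neighborhoods of $y$ shrinking to $\{y\}$, and thereby isolate the contribution at $z=y$, which is a nonzero multiple of $T(f)$. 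Once $T'(f)\neq 0$ for a suitable $f$ is secured, $\mathrm{supp}\,T'\supseteq \overline{\mathcal{Y}_y}$ follows, and $G$-invariance upgrades this to $\mathrm{supp}\,T' \supseteq G\cdot\overline{\mathcal{Y}_y} = \overline{\mathcal{Y}}$. The main obstacle is precisely this local noncancellation step: one has to exploit both the totally disconnected structure of $\mathcal{K}$ and the fact that $\mathcal{Y}_y$ is a dense subset of $\mathrm{supp}\,T=\overline{\mathcal{Y}_y}$ to rule out that boundary points of $\overline{\mathcal{Y}_z}\setminus\mathcal{Y}_z$, for $z$ near but distinct from $y$, pile up near $x_0$ and compete with the $z=y$ contribution.
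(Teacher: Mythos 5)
Your setup is exactly the paper's: form $\mathcal{X}\times\mathcal{K}$ with the diagonal $G$-action, apply Proposition~\ref{frob} to the second projection to lift $T$ to a $G$-invariant $\tilde T$ on $\mathcal{X}\times\mathcal{K}$, then push forward along the first projection, which is proper because $\mathcal{K}$ is compact. The point-set identity $G\cdot\overline{\mathcal{Y}_y}=\overline{\mathcal{Y}}$ and the inclusion $\mathrm{supp}\,T'\subseteq\overline{\mathcal{Y}}$ are also fine. So far this is the paper's argument.

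Where you diverge — and where you leave a genuine gap — is the lower bound $\overline{\mathcal{Y}}\subseteq\mathrm{supp}\,T'$. You propose to unwind the Frobenius integral, write $T'(f)$ as an integral over $z\in\mathcal{K}$ of translates $T(g_z\cdot f)$, and try to isolate the $z=y$ term by shrinking clopen neighborhoods of $y$. You yourself flag that contributions from $z$ near $y$ might pile up near $x_0$ and cancel, and you do not resolve this. The cancellation concern is real, and moreover the choice $z\mapsto g_z$ of coset representatives is not canonical, so controlling the integrand pointwise in $z$ is awkward; there is no obvious way to push this approach through. The paper never confronts this integral at all. It argues directly through the support of $\tilde T$: that support is $\overline{Q}$, the closure of the graph of $i$. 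Given $x\in\mathcal{Y}$, continuity of $i$ lets you pick compact open $U\ni x$ and $V\ni i(x)$ with $i(U\cap\mathcal{Y})\subset V$, and then the graph structure forces $\overline{Q}\cap(U\times\mathcal{K})\subset U\times V$, so $U\times(\mathcal{K}\setminus V)$ misses $\mathrm{supp}\,\tilde T$ entirely. Hence $T'(\chi_U)=\tilde T(\chi_{U\times\mathcal{K}})=\tilde T(\chi_{U\times V})$, which is nonzero since $(x,i(x))\in\mathrm{supp}\,\tilde T$. The entire "competition" between $z=y$ and nearby $z$ that you are worried about is absorbed by the single observation that the support is a graph closure; only the rectangle $U\times V$ can ever contribute, and there is nothing to cancel against. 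That is the idea your proposal is missing, and without it the last paragraph of your proof does not close.
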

\begin{proof}
Consider the product space $\mathcal{X}\times \mathcal{K}$ with its $G$-action. Let $Q\subset \mathcal{X}\times \mathcal{K}$ be the graph of the map $i$. We apply Proposition \ref{frob} relative to the projection $p_2:\mathcal{X}\times \mathcal{K}\to \mathcal{K}$ on the right component. This gives the correspondence $\mathcal{D}(p_2^{-1}(y))^{B,\Delta_{G/B}^{-1}}\cong \mathcal{D}(\mathcal{X}\times \mathcal{K})^G$. Identifying $p_2^{-1}(y)$ with $\mathcal{X}$, we see that the existence of $T$ as described in the assumption supplies the existence of a $G$-invariant distribution $T''$ on $\mathcal{X}\times \mathcal{K}$ whose support is $\overline{G\cdot (\overline{\mathcal{Y}}_y\times \{y\})}= \overline{Q}$.\\
Consider the projection $p_1:\mathcal{X}\times \mathcal{K}\to \mathcal{X}$ on the left component. Since $\mathcal{K}$ is compact, if $f$ is in $\mathcal{S}(\mathcal{X})$, then $f\circ p_1\in\mathcal{S}(\mathcal{X}\times \mathcal{K})$. Hence, we can push distributions with $p_{1\ast}:\mathcal{D}(\mathcal{X}\times \mathcal{K})\to \mathcal{D}(\mathcal{X})$ defined by $p_{1\ast}(T)(f) = T(f\circ p_1)$. \\
We claim that the support of $T':= p_{1\ast}(T'')$ is exactly $\overline{\mathcal{Y}}$. Indeed, for $x\in \mathcal{Y}$, we have $(x,i(x))\in supp\, T''$. Therefore there exists an open compact neighborhood $U\times V$ of $(x,i(x))$ with $T''(\chi_{U\times V})\neq 0$ (with $\chi$ denoting the characteristic function of a set). From continuity of $i$ we can assume that $i(U\cap \mathcal{Y})\subset V$, hence, $U\times( \mathcal{K}\setminus V)$ lies outside the support of $T''$. So, $T'(\chi_U) = T''(\chi_{U\times V})\neq 0$.
\end{proof}

\subsection{Luna's Slice Theorem and applications}
In \cite{aiz-gur1}, the authors developed a mechanism for the transfer of information on spaces of invariant distributions on $F$-varieties to invariant distributions on $F$-linear spaces. We will now review the required tools from the mentioned reference.
\par Suppose a linear reductive $F$-group $\mathbf{G}$ acts on an affine $F$-variety $\mathbf{V}$. We will say that a subset $U\subset \mathbf{V}(F)$ is \textit{saturated} if there is a $\mathbf{G}(F)$-invariant continuous map $\pi:\mathbf{V}(F)\to F^m$ (such map always exists) and an open set $V\subset F^m$ such that $U = \pi^{-1}(V)$. In particular, saturated subsets are open and $\mathbf{G}(F)$-invariant.
\par In the same setting, suppose $x\in \mathbf{V}(F)$ is such that its $\mathbf{G}(F)$-orbit $\mathcal{O}$ is closed (equivalently, its $\mathbf{G}$-orbit is Zariski closed). The tangent spaces $T_x(\mathbf{G}\cdot x)\subset T_x\mathbf{V}$ are well-defined and are equipped with the action of the stabilizer group $\mathbf{G}^x$ of $x$ (by differentiation). So, $\mathbf{G}^x$ acts on their quotient which is the normal space $\mathbf{N}_x(\mathbf{G}\cdot x)$ whose $F$-points may be identified with the normal space $N_x\mathcal{O}$ of $F$-analytic manifolds.
\begin{thm}\label{luna}\textbf{Luna's Slice Theorem }(see e.g.~\cite[2.3.17]{aiz-gur1}) \\ 
Let $\mathbf{G}$ be a linear reductive $F$-group acting on an affine $F$-variety $\mathbf{V}$, and $x\in \mathbf{V}(F)$ such that its $\mathbf{G}(F)$-orbit $\mathcal{O}$ is closed. Then there exists an open $\mathbf{G}(F)$-invariant neighborhood $\mathcal{O}\subset U\subset \mathbf{V}(F)$, a $\mathbf{G}(F)$-equivariant continuous retract $p: U\to\mathcal{O}$, and a $\mathbf{G}^x(F)$-equivariant $F$-analytic embedding $\psi: p^{-1}(x)\to N_x\mathcal{O}$ whose image is saturated and $\psi(x)=0$.
\end{thm}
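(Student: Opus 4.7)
The plan is to deduce the $p$-adic statement from Luna's algebraic étale slice theorem by passing to $F$-points via the $p$-adic implicit function theorem. A key preliminary observation: since the $\mathbf{G}$-orbit of $x$ is Zariski closed, Matsushima's criterion guarantees that $\mathbf{G}^x$ is reductive, and I will use this reductivity repeatedly to secure equivariant choices.

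I first work algebraically. Reductivity of $\mathbf{G}^x$ allows me to $\mathbf{G}^x$-equivariantly split the short exact sequence
$$0\longrightarrow T_x(\mathbf{G}\cdot x)\longrightarrow T_x\mathbf{V}\longrightarrow N_x(\mathbf{G}\cdot x)\longrightarrow 0,$$
realizing $N_x(\mathbf{G}\cdot x)$ as a $\mathbf{G}^x$-stable subspace of $T_x\mathbf{V}$. Luna's algebraic étale slice theorem then produces a $\mathbf{G}^x$-stable locally closed affine subvariety $\mathbf{S}\subset \mathbf{V}$ through $x$, with $T_x\mathbf{S}$ equal to the chosen complement, together with the natural $\mathbf{G}$-equivariant map
$$\Phi:\mathbf{G}\times^{\mathbf{G}^x}\mathbf{S}\longrightarrow \mathbf{V}$$
which is étale on a $\mathbf{G}$-saturated Zariski-open neighborhood of the class $[e,x]$ and whose image is $\mathbf{G}$-saturated Zariski-open in $\mathbf{V}$. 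By shrinking $\mathbf{S}$ I may further arrange that the quotient map $\mathbf{S}\to\mathbf{S}/\!\!/\mathbf{G}^x$ pulls back through $\Phi$ from an affine quotient of the image of $\Phi$ by $\mathbf{G}$, so that $\mathbf{G}$-saturated opens on one side correspond to $\mathbf{G}^x$-saturated opens on the other.

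Next I pass to $F$-analytic neighborhoods. Étaleness of $\Phi$ together with the $p$-adic implicit function theorem makes $\Phi(F)$ a local $F$-analytic isomorphism at $[e,x]$; transporting this isomorphism around the orbit via the $\mathbf{G}(F)$-action and invoking equivariance of $\Phi$, I obtain a $\mathbf{G}(F)$-invariant open neighborhood $U\subset\mathbf{V}(F)$ of $\mathcal{O}$ which is $\mathbf{G}(F)$-equivariantly $F$-analytically identified with an open $\mathbf{G}(F)$-invariant subset of $(\mathbf{G}\times^{\mathbf{G}^x}\mathbf{S})(F)$. Pulling back the natural projection $\mathbf{G}\times^{\mathbf{G}^x}\mathbf{S}\to\mathbf{G}/\mathbf{G}^x=\mathcal{O}$ then yields the $\mathbf{G}(F)$-equivariant continuous retract $p:U\to\mathcal{O}$. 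The fibre $p^{-1}(x)$ is an open $\mathbf{G}^x(F)$-invariant subset of $\mathbf{S}(F)$, and a second application of the $p$-adic implicit function theorem at $x\in\mathbf{S}$ (using $T_x\mathbf{S}=N_x(\mathbf{G}\cdot x)$ together with the $\mathbf{G}^x$-equivariant splitting) produces the desired $\mathbf{G}^x(F)$-equivariant $F$-analytic embedding $\psi:p^{-1}(x)\to N_x\mathcal{O}$ with $\psi(x)=0$.

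I expect the main obstacle to be verifying that the image of $\psi$ is \emph{saturated} in the precise sense introduced before the statement, i.e., the preimage of an open set under some $\mathbf{G}^x(F)$-invariant continuous map into an $F^m$. The natural such map should come from the $F$-points of the categorical quotient $N_x\mathcal{O}\to N_x\mathcal{O}/\!\!/\mathbf{G}^x$, which by the compatibility arranged in the algebraic step corresponds under $\psi$ to the restriction of the quotient $\mathbf{S}\to \mathbf{S}/\!\!/\mathbf{G}^x$. To make this precise I will shrink $p^{-1}(x)$ to the preimage under this quotient of a small enough open neighborhood of the image of $x$, and correspondingly shrink $U$ to $\mathbf{G}(F)\cdot p^{-1}(x)$; the delicate point is that these shrinkings must remain open in $\mathbf{V}(F)$ and preserve $\mathbf{G}(F)$-invariance. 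All other features of the conclusion -- the equivariance and retract properties -- follow by transport of structure from the algebraic picture of Step one.
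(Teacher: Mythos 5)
The paper does not supply a proof of this statement; it simply cites \cite[2.3.17]{aiz-gur1}. So there is nothing to compare your argument against inside the paper itself. Your outline is the standard route one would expect behind the cited result: apply the algebraic Luna \'etale slice theorem, analytify via the non-archimedean inverse function theorem, and then worry about saturation. The overall architecture is sound, and you have correctly identified saturation as a genuine issue. However, there are two places where the sketch glosses over real difficulties.

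First, the passage from a local $F$-analytic isomorphism at $[e,x]$ to a $\mathbf{G}(F)$-equivariant identification of a $\mathbf{G}(F)$-invariant neighborhood $U$ of $\mathcal{O}$ with a $\mathbf{G}(F)$-invariant open in $(\mathbf{G}\times^{\mathbf{G}^x}\mathbf{S})(F)$ is not automatic. An \'etale map is a local analytic isomorphism at each $F$-point, but it need not be injective on $F$-points of any prescribed $\mathbf{G}(F)$-invariant open, and the local inverses at the various points of the orbit need not patch into a single analytic inverse on a $\mathbf{G}(F)$-invariant set. What makes this work in Luna's framework is the \emph{strong} \'etaleness: $\Phi$ restricts to an isomorphism onto a saturated Zariski-open, compatibly with the categorical quotients. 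You need to arrange this over $F$ (not merely over $\overline{F}$), which in turn requires checking that the slice $\mathbf{S}$, the saturated opens, and the quotient morphisms in Luna's theorem are all $F$-rational. This demands an argument (e.g. a Galois descent or an explicit choice) that your sketch only gestures toward by choosing the linear complement over $F$. Once strong \'etaleness over $F$ is secured, the identification on $F$-points follows because isomorphisms of $F$-varieties induce homeomorphisms of $F$-points; without it, the ``transport around the orbit'' step has no justification.

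Second, your resolution of saturation depends on the compatibility of quotient maps that you postulated in the algebraic step ("shrinking $\mathbf{S}$ so that the quotient map pulls back..."). This is exactly the content of strong \'etaleness again, so the same gap reappears: you should either quote a precise form of Luna's theorem that already asserts this compatibility over $F$, or prove the $F$-rationality of the relevant saturated sets. As it stands, the delicate point you flag (that shrinking to preimages under the quotient keeps things open and $\mathbf{G}(F)$-invariant) is precisely what needs to be verified, and the sketch does not carry it through. Both issues are closable, but the proposal in its current form relies on the strong \'etale slice structure over $F$ without establishing it.
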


Let us remark that the slices of the variety obtained in the above procedure exhaust the whole variety because of the following fact.
\begin{prop}\label{covered}\cite[2.3.7]{aiz-gur1}
Suppose a linear $F$-group $\mathbf{G}$ acts on an affine $F$-variety $\mathbf{V}$. If $U$ is an open subset of $\mathbf{V}(F)$ which contains all closed $\mathbf{G}(F)$-orbits on $\mathbf{V}(F)$, then $U=\mathbf{V}(F)$.
\end{prop}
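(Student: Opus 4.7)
The plan is to argue by contradiction. Suppose $U\subsetneq \mathbf{V}(F)$ and fix some $x\in \mathbf{V}(F)\setminus U$. In keeping with the Luna-type context of the preceding theorem, where the neighborhoods produced are saturated (hence $\mathbf{G}(F)$-invariant), I read the hypothesis on $U$ as implicitly including $\mathbf{G}(F)$-invariance; without it the statement is already false for the multiplication action of $\mathbf{GL}_1$ on $\mathbb{A}^1$, whose only closed $F$-orbit is $\{0\}$. With this reading in force, the crux of the argument reduces to producing a closed $\mathbf{G}(F)$-orbit inside the $p$-adic closure $\overline{\mathbf{G}(F)\cdot x}$.

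To locate such an orbit I would invoke the classical orbit-closure principle in a $p$-adic guise. Since $\mathbf{G}$ should be taken reductive (this is implicit here, matching the Luna framework), one has the categorical quotient $\pi\colon \mathbf{V}\to \mathbf{V}/\!/\mathbf{G}=\mathrm{Spec}\,F[\mathbf{V}]^{\mathbf{G}}$. The fiber $\pi^{-1}(\pi(x))$ is then a $\mathbf{G}$-invariant Zariski-closed---hence $p$-adically closed---subset containing $\overline{\mathbf{G}(F)\cdot x}$, and it contains a unique Zariski-closed $\mathbf{G}$-orbit $\mathcal{O}_Z$. The $F$-points $\mathcal{O}_Z(F)$ decompose into finitely many $\mathbf{G}(F)$-orbits, each of which is simultaneously open and closed in $\mathcal{O}_Z(F)$ by homogeneity of the analytic structure, and therefore $p$-adically closed in $\mathbf{V}(F)$. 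A dimension-minimality selection among the $\mathbf{G}(F)$-orbits meeting $\overline{\mathbf{G}(F)\cdot x}$ then singles out a $p$-adically closed $\mathbf{G}(F)$-orbit $\mathcal{O}$ contained in that closure.

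Once $\mathcal{O}$ is in hand, the hypothesis gives $\mathcal{O}\subset U$, so the openness of $U$ provides an open neighborhood $W$ of some point of $\mathcal{O}$ with $W\subset U$. Since $\mathcal{O}\subset\overline{\mathbf{G}(F)\cdot x}$, a translate $g\cdot x$ must lie in $W\subset U$, and the $\mathbf{G}(F)$-invariance of $U$ then forces $x=g^{-1}\cdot(g\cdot x)\in U$, contradicting the choice of $x$.

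The chief obstacle is the first step: transferring the orbit-closure principle to $p$-adic topology. In the Zariski picture the statement is standard for reductive groups, but the passage to $F$-points is delicate, because a single Zariski-closed $\mathbf{G}$-orbit may split into several $\mathbf{G}(F)$-orbits of possibly different topological behavior (differing by Galois cohomology), and one must check that at least one of them both lies in the $p$-adic closure of $\mathbf{G}(F)\cdot x$ and is itself $p$-adically closed. Reductivity of $\mathbf{G}$ enters essentially here, both to secure the existence of the categorical quotient and to control the compatibility between Zariski and analytic orbit closures that makes the minimal-dimension selection work.
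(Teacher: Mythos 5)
You correctly flag that the statement as printed needs an invariance hypothesis: your $\mathbf{GL}_1$ counterexample is valid, and the application in the proof of Lemma~\ref{cruc} feeds in a union of $H_{k,n}$-invariant open sets, so reading $U$ as $\mathbf{G}(F)$-invariant is indeed the intended (and necessary) hypothesis. Your overall plan --- take $x\notin U$, locate a closed $\mathbf{G}(F)$-orbit in the $p$-adic closure of $\mathbf{G}(F)\cdot x$, then use openness and invariance of $U$ to force $x\in U$ --- is the right route, and your concluding paragraph carries it out correctly.

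The middle of your argument is, however, both more elaborate than necessary and incomplete as written. You invoke the categorical quotient and the unique Zariski-closed $\mathbf{G}$-orbit $\mathcal{O}_Z$ in the fiber, but you never establish that the $p$-adic closure of $\mathbf{G}(F)\cdot x$ actually meets $\mathcal{O}_Z(F)$, and $\mathcal{O}_Z(F)$ may even be empty. The detour through GIT is unnecessary: the $p$-adic closure $C$ of $\mathbf{G}(F)\cdot x$ is closed and $\mathbf{G}(F)$-invariant, hence a union of $\mathbf{G}(F)$-orbits, and one of minimal dimension, say $\mathcal{O}$, exists because orbit dimensions are bounded. Its $p$-adic closure lies in $C$, and every orbit in it other than $\mathcal{O}$ itself has strictly smaller dimension; by minimality there are none, so $\mathcal{O}$ is already closed. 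The only step deserving an explicit word is the sub-lemma that the $p$-adic boundary of a $\mathbf{G}(F)$-orbit consists of strictly lower-dimensional orbits. This follows because $\mathbf{G}(F)\cdot y$ is both open and closed in $(\mathbf{G}\cdot y)(F)$ (it is open since the orbit map is a submersion, and closed since $(\mathbf{G}\cdot y)(F)$ is a finite union of such open orbits, by finiteness of the relevant Galois cohomology), while $(\mathbf{G}\cdot y)(F)$ is open in the $F$-points of the Zariski closure of $\mathbf{G}\cdot y$; hence the $p$-adic boundary lies in the $F$-points of the Zariski boundary, whose dimension drops.

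Finally, note that this streamlined dimension-minimality argument nowhere uses reductivity of $\mathbf{G}$, which is consistent with the proposition being stated for an arbitrary linear $F$-group. Your version, by leaning on the affine GIT quotient, would force a reductivity hypothesis that the statement does not impose and that is not in fact required.
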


We will need to apply this linearization technique on the action of $H_{k,n}$ on $X_{k,n}$. This is possible because we may view $X_{k,n}\cong G_n/H_{k,n}$ as embedded inside the $F$-variety $(\mathbf{GL}_n/\mathbf{GL}_k\times \mathbf{GL}_{n-k})(F)$. As we have mentioned, for $x\in X_{k,n}$ the stabilizer group of the $H_{k,n}$-action is $H_{k,n}^x$.
\begin{prop}\label{closed}
If $x\in X_{k,n}$ has a closed $H_{k,n}$-orbit, then $x$ is a semisimple matrix, and the action of $H^x_{k,n}$ on the normal space $N_x\mathcal{O}$ is isomorphic to its action on $\mathbf{L}_{k,n}^{Ad(x)}(F)$ by conjugation.
\end{prop}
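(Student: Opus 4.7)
The plan is to prove the two assertions separately. For semisimplicity I would invoke the Jordan decomposition $x = x_s x_u$ in $\mathbf{GL}_n(F)$; since $\theta_{k,n}$ is an algebraic automorphism, applying it to the decomposition and comparing with $\theta_{k,n}(x) = x^{-1} = x_s^{-1} x_u^{-1}$ forces $\theta_{k,n}(x_s) = x_s^{-1}$ and $\theta_{k,n}(x_u) = x_u^{-1}$ by uniqueness. If $x_u \neq I_n$, the nilpotent $\log x_u$ lies in the centralizer of $x_s$ and satisfies $d\theta_{k,n}(\log x_u) = -\log x_u$; a $\theta_{k,n}$-equivariant version of Jacobson-Morozov applied in that centralizer yields an $\mathfrak{sl}_2$-triple whose neutral element sits in the Lie algebra of $H_{k,n}^{x_s}$, and the associated cocharacter $\lambda: \mathbf{G}_m(F) \to H_{k,n}^{x_s}$ satisfies $\lim_{t\to 0}\lambda(t)\, x\, \lambda(t)^{-1} = x_s$. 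Closedness of $\mathcal{O} = H_{k,n}\cdot x$ then places $x_s$ in $\mathcal{O}$, forcing $x_u = I_n$.

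For the normal-space identification, my plan is to work in right-coordinates. Parametrize a neighborhood of $x$ in $X_{k,n}$ by $y = x(I_n + tB)$ with $B \in \mathfrak{g} := \mathbf{M}_{n,n}(F)$. Linearizing $y\theta_{k,n}(y) = I_n$ gives $d\theta_{k,n}(B) + Ad(x)(B) = 0$, so $T_x X_{k,n}$ is the $-1$-eigenspace of the involution $\tau_x := d\theta_{k,n} \circ Ad(x)$. A direct calculation shows $\tau_x = Ad(\tilde{\epsilon})$ for $\tilde{\epsilon} := \epsilon_{k,n} x$, and the relation $\tilde{\epsilon}^2 = (\epsilon_{k,n} x \epsilon_{k,n})\, x = \theta_{k,n}(x)\, x = I_n$ shows $\tilde{\epsilon}$ is itself an involution. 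Writing $x = g\theta_{k,n}(g)^{-1}$, we get $x\epsilon_{k,n} = g\epsilon_{k,n}g^{-1}$, so $\tilde{\epsilon}$ is $G_n$-conjugate to $\epsilon_{k,n}$ and hence has signature $(k, n-k)$; this yields $\dim \mathfrak{g}^{-\tau_x} = 2k(n-k) = \dim X_{k,n}$, confirming smoothness. In the same coordinates, the orbit tangent $T_x \mathcal{O} = [\mathfrak{h}, x]$ becomes $(Ad(x^{-1}) - 1)(\mathfrak{h})$, where $\mathfrak{h}$ denotes the Lie algebra of $H_{k,n}$, and the $H_{k,n}^x$-action is the restriction of the adjoint representation.

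I would then verify that the natural inclusion $\mathbf{L}_{k,n}^{Ad(x)}(F) \hookrightarrow \mathfrak{g}^{-\tau_x}$ (valid since $d\theta_{k,n}(B) = -B$ together with $Ad(x)(B) = B$ implies $\tau_x(B) = -B$) descends to an $H_{k,n}^x$-equivariant isomorphism $\mathbf{L}_{k,n}^{Ad(x)}(F) \xrightarrow{\sim} \mathfrak{g}^{-\tau_x} / (Ad(x^{-1}) - 1)(\mathfrak{h}) = N_x\mathcal{O}$. For injectivity, any $Y \in \mathfrak{h}$ with $(Ad(x^{-1}) - 1)Y \in \mathbf{L}_{k,n}^{Ad(x)}(F)$ satisfies $(Ad(x) + Ad(x^{-1}))Y = 2Y$, and a generalized-eigenspace decomposition of $Ad(x)$ (available by semisimplicity of $x$) then forces $Y \in \mathfrak{h}^{Ad(x)}$, whence $(Ad(x^{-1}) - 1)Y = 0$. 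For surjectivity, $\mathfrak{g}^{Ad(x)}$ is $d\theta_{k,n}$-stable (via $d\theta_{k,n} \circ Ad(x) = Ad(x^{-1}) \circ d\theta_{k,n}$) and splits as $\mathfrak{h}^{Ad(x)} \oplus \mathbf{L}_{k,n}^{Ad(x)}(F)$; combined with the signature of $\tilde{\epsilon}$ computed above, a dimension count matches $\dim \mathfrak{g}^{-\tau_x} - \dim(Ad(x^{-1}) - 1)(\mathfrak{h})$ with $\dim \mathbf{L}_{k,n}^{Ad(x)}(F)$.

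The hardest step, in my view, is the dimension match in the surjectivity part, which genuinely exploits $x$'s membership in the orbit $X_{k,n}$ -- through the prescribed signature of $\tilde{\epsilon}$ -- and not merely the weaker algebraic condition $\theta_{k,n}(x) = x^{-1}$. A secondary subtlety is the $\theta_{k,n}$-equivariant Jacobson-Morozov invoked in the first paragraph, which can be arranged by averaging an arbitrary $\mathfrak{sl}_2$-triple over $\theta_{k,n}$; alternatively, the semisimplicity claim can be quoted as a standard property of closed orbits on $p$-adic symmetric spaces.
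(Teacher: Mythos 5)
Your argument is correct in outline but takes a genuinely different route from the paper, which simply dispatches this proposition by citing \cite[Proposition 7.2.1]{aiz-gur1} (the analogous statement for the \emph{two-sided} action of $\mathbf{GL}_k\times\mathbf{GL}_{n-k}$ on $\mathbf{GL}_n$) together with the remark that normal spaces and closed orbits correspond under passage to the one-sided quotient $\mathbf{GL}_n/(\mathbf{GL}_k\times\mathbf{GL}_{n-k})$. You instead give a direct, self-contained computation: the semisimplicity via Jordan decomposition and a $\theta$-equivariant Jacobson--Morozov (a Kostant--Rallis type lemma) is fine, and the identification $T_xX_{k,n}\cong \mathfrak{g}^{-\tau_x}$ with $\tau_x=\mathrm{Ad}(\epsilon_{k,n}x)$ in right-coordinates is a clean observation. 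The spot to tighten is the surjectivity/dimension count, where you write only that ``a dimension count matches'' and assert that the signature of $\tilde\epsilon$ --- hence genuine membership $x\in X_{k,n}$ --- is essential. In fact the weaker hypothesis $\theta_{k,n}(x)=x^{-1}$ together with semisimplicity already forces the count. Writing $\mathfrak{g}=\mathbf{M}_{n,n}(F)$, $\mathfrak{h}$ for the Lie algebra of $H_{k,n}$, and $T=\mathrm{Ad}(x^{-1})-1$, semisimplicity of $\mathrm{Ad}(x)$ gives $\mathfrak{g}=\mathfrak{g}^{\mathrm{Ad}(x)}\oplus T\mathfrak{g}$ and a short rank count gives $T\mathfrak{g}=T\mathfrak{h}\oplus T\mathbf{L}_{k,n}(F)$; one checks directly that $\mathbf{L}_{k,n}^{\mathrm{Ad}(x)}(F)\oplus T\mathfrak{h}\subseteq\mathfrak{g}^{-\tau_x}$ and $\mathfrak{h}^{\mathrm{Ad}(x)}\oplus T\mathbf{L}_{k,n}(F)\subseteq\mathfrak{g}^{\tau_x}$, and since the total dimensions on the left already sum to $n^2$, both inclusions are equalities. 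This yields $N_x\mathcal{O}=\mathfrak{g}^{-\tau_x}/T\mathfrak{h}\cong\mathbf{L}_{k,n}^{\mathrm{Ad}(x)}(F)$ immediately, supplies the count you left implicit, and also absorbs your separate injectivity argument. The trade-off versus the paper: your proof is longer but does not require the reader to unwind the two-sided/one-sided translation or to consult \cite{aiz-gur1}.
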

\begin{proof}
This is \cite[Proposition 7.2.1]{aiz-gur1}, after noting that the normal space in $\mathbf{GL}_n$ relative to the action of $\mathbf{GL}_k\times \mathbf{GL}_{n-k}$ on both sides is isomorphic to the normal space of $\mathbf{GL}_n/\mathbf{GL}_k\times \mathbf{GL}_{n-k}$ relative to the one sided action. Moreover, the closed orbits of both actions are in correspondence.
\end{proof}

\section{Reduction to a linear problem}
As will be explained in Section 5, the answer to the representation theoretic problems with which we are dealing can be deduced from the conjectural equalities $\mathcal{D}(X_{k,n})^{P_{k,n}}= \mathcal{D}(X_{k,n})^{H_{k,n}}$ of distribution spaces. We would like to reduce the problem of proving such an equality into an equality of distribution spaces on a given cone of a linear space.
\par Let us introduce some terminology. When a group $G$ acts on a space $X$, we would like to call the pair $(G,X)$ an \textit{action space}. Given a set $\{(G^i,X^i)\}_{i=1}^t$ of action spaces, we will say that an action space $(G,X)$ is their \textit{product}, if there exists a group isomorphism $G\cong G^1\times\cdots\times G^t$ and a bijection $X \to X^1\times\cdots\times X^t$ which intertwines the $G$-action relative to the fixed isomorphism.
\par Suppose an action space $(G,X)$ admits a decomposition into a product of $\{(G^i,X^i)\}_{i=1}^t$. We call a subgroup $H< G$ \textit{admissible} to this decomposition if the fixed isomorphism $G\cong G^1\times\cdots\times G^t$ sends $H$ to a product of subgroups $H^i<G^i$. In this case $(H,X)$ becomes the product of $\{(H^i, X^i)\}_{i=1}^t$.

\subsection{Geometry of closed orbits}
\par Suppose $x\in X_{k,n}$ is a semisimple matrix (for some $k<n-1$). Our first mission is to decompose the conjugation actions of $H_{k,n}^x$ and its subgroup $P_{k,n}^x$ on $\mathbf{L}_{k,n}^{Ad(x)}(F)$ into a product of simpler action spaces. Regarding the group $H_{k,n}^x$, such a decomposition can surely be deduced from similar descriptions in \cite{jac-ral} and \cite{aiz-gur1}, but we prefer to reproduce it here directly for completeness.

\begin{prop}\label{structure}
\textbf{1.} The action space $\left(H_{k,n}^x,\mathbf{L}_{k,n}^{Ad(x)}(F)\right)$ decomposes as a product of the action spaces $\left\{(\mathbf{GL}_{l_i}(E_i), \mathbf{M}_{l_i,l_i}(E_i))\right\}_{i=1}^t$ and of $\{(H_{m_i,l_i}, \mathbf{L}_{m_i,l_i}(F))\}_{t<i\leq s}$,
where $0\leq t \leq s\leq t+2$, for every $1\leq i\leq t$, $F<E_i$ is a finite field extension and $l_i$ is a positive integer; and for every $t< i\leq s$, $m_i\leq k, l_i\leq n$ are positive integers. All actions in the decomposition are by conjugation.\\ \\
\textbf{2.} The subgroup $P^x_{k,n}$ of $H^x_{k,n}$ is admissible to the decomposition above. Therefore, the action space $\left(P_{k,n}^x,\mathbf{L}_{k,n}^{Ad(x)}(F)\right)$ is a product of $\left\{(P^i, \mathbf{M}_{l_i,l_i}(E_i)\right\}_{i=1}^t$ and of $\{(P^i, \mathbf{L}_{m_i,l_i}(F)\}_{t<i\leq s}$, for certain subgroups $P_i$. Those are given as follows:
$$P^i = \left\{\begin{array}{ccc} \mathbf{GL}_{l_i}(E_i) & \alpha_i=0 & 1\leq i\leq t \\ \mathbf{P}_{l_i}(E_i) & \alpha_i =1 \\ H_{m_i,l_i} & \alpha_i=0 & t<i\leq s \\ P_{m_i,l_i} & \alpha_i=1   \end{array} \right.$$

where $\alpha(x) = (\alpha_1,\ldots, \alpha_s)\in \{0,1\}^s$ is a fixed tuple.  \\ \\
\textbf{3.} After identifying $P^x_{k,n}$ with $\prod_{i=1}^s P^i$, the relative modular character of $P^x_{k,n}$ inside $P_{k.n}$ is given by
$$\Delta_{P_{k,n}/P^x_{k,n}}(g_1,\ldots,g_s) = \prod_{\{i\;:\; \alpha_i=0\}} |\det \widetilde{g_i}|^{-1}\; ,$$
where 
$$\widetilde{g_i} = \left\{ \begin{array}{lll} g_i & 1\leq i\leq t \\ h_i & t<i\leq s & H_{m_i,l_i} =  G_{m_i}\times G_{l_i-m_i},\, g_i= (f_i,h_i) \end{array} \right. .$$

\end{prop}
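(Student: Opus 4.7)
The plan is to decompose everything via the eigenspace structure of $x$. Since $x \in X_{k,n}$ is semisimple and satisfies $\theta_{k,n}(x) = x^{-1}$, the commutative $F$-algebra $A = F[x]$ is a product $\prod_j A_j$ of finite field extensions, one per $F$-irreducible factor of the minimal polynomial. The conjugation $a \mapsto \epsilon_{k,n} a \epsilon_{k,n}$ preserves $A$ (sending $x$ to $x^{-1}$) and induces an involution on the set of primitive idempotents, with three orbit types: (a) a swapped pair $\{A_j, A_{j'}\}$; (b) a fixed $A_j$ on which the induced involution is a nontrivial order-two $F$-algebra automorphism with fixed subfield $E_i$; and (c) a fixed $A_j$ on which it acts trivially, forcing $x$ to act as $\pm 1$ on the corresponding isotypic component. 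Since there are only two such possible scalars, type (c) contributes at most two orbits, giving the bound $s \leq t + 2$.

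Grouping the $A$-isotypic components gives $F^n = \bigoplus_{i=1}^s V_i$. In each block I would identify $H_{k,n}^x|_{V_i}$ as follows: in types (a) and (b) it is a single copy of $\mathbf{GL}_{l_i}(E_i)$ (the two pieces of a swapped pair being diagonally linked through $\epsilon_{k,n}$ in case (a); in case (b), commuting with a semilinear involution reduces $A_j$-linear operators to $E_i$-linear ones on the fixed form); in type (c), the further splitting $V_i = V_i^+ \oplus V_i^-$ into $\pm 1$-eigenspaces of $\epsilon_{k,n}|_{V_i}$ (with $m_i := \dim V_i^+$) produces $H_{m_i, l_i}$. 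This is essentially the standard structure of semisimple orbits in the symmetric space $\mathbf{GL}_n/\mathbf{GL}_k \times \mathbf{GL}_{n-k}$ as in \cite{jac-ral} and \cite[Section 7]{aiz-gur1}. The same decomposition handles $\mathbf{L}_{k,n}^{Ad(x)}(F)$: $Ad(x)$-invariants are block-diagonal with respect to $\bigoplus V_i$, and the $\theta_{k,n}$-anti-invariance extracts $\mathbf{M}_{l_i,l_i}(E_i)$ from each type (a), (b) block (as the $E_i$-endomorphism space between paired blocks, or the anti-fixed part of $\mathrm{End}_{E_i}(V_i)$) and $\mathbf{L}_{m_i, l_i}(F)$ from each type (c) block.

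For part 2, write $e_n = \sum_i v_i$ with $v_i \in V_i$. Admissibility of $P_{k,n}^x$ is then immediate, since the stabilizer in a product action is the product of stabilizers. The crucial point is that $e_n$ lies in $F^{n-k}$, which is the $-1$-eigenspace of $\epsilon_{k,n}$ on $F^n$, so in every type-(c) block the projection of $e_n$ onto $V_i^+ \subseteq F^k$ vanishes automatically. Consequently the mirabolic condition there touches only the $V_i^-$-side, yielding $P^i = G_{m_i} \times \mathbf{P}_{l_i - m_i}(F) = P_{m_i, l_i}$ rather than a joint stabilizer on both summands. In types (a), (b), the stabilizer of a nonzero vector in $E_i^{l_i}$ is conjugate to $\mathbf{P}_{l_i}(E_i)$. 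Setting $\alpha_i = 1$ precisely when $v_i \neq 0$ recovers the stated form of each $P^i$.

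For part 3, I would use $\Delta_{P_{k,n}/P^x_{k,n}} = \Delta_{P_{k,n}}|_{P^x_{k,n}} \cdot \Delta_{P^x_{k,n}}^{-1}$. Evaluated on $(g_1,\ldots,g_s) \in \prod_i P^i$, the numerator $\Delta_{P_{k,n}}$ is $|\det|^{-1}$ of the total action on $F^{n-k}$, which factors as $\prod_i |\det \widetilde{g_i}|^{-1}$ with $\widetilde{g_i}$ precisely as defined in the statement (the $F$-linear determinant of the action on $V_i \cap F^{n-k}$). The denominator $\Delta_{P^x_{k,n}}$ is the product of $|\det \widetilde{g_i}|^{-1}$ over the indices with $\alpha_i = 1$, via the formula $\Delta_{\mathbf{P}_l(E)}(g) = |\det g|^{-1}$ in terms of the $F$-linear determinant (noted in the preliminaries) and its analogue for $P_{m_i,l_i}$. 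Cancellation leaves exactly the product over $\{i : \alpha_i = 0\}$. The main obstacle I anticipate is the careful verification in part 1, especially case (b): a priori the centralizer of a semilinear involution might appear to be unitary in nature, and showing it is simply $\mathbf{GL}_{l_i}(E_i)$ over the fixed subfield — together with correctly identifying the anti-invariant linear space as $\mathbf{M}_{l_i,l_i}(E_i)$ — is where the argument requires the most care.
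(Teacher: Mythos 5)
Your proposal is correct and follows essentially the same line of reasoning as the paper's own proof. You work with the algebra $F[x]$ and its primitive idempotents where the paper works directly with the decomposition $F^n = \bigoplus_j V_j$ into isotypic components, but these are equivalent bookkeeping devices; the three cases you identify (swapped pairs; a fixed field acted on by a non-trivial Galois involution with fixed subfield of index $2$; a fixed block on which $x$ acts as $\pm 1$) match the paper's cases (1), (2), (3), and the subsequent identification of $P^x_{k,n}$ from the components $v_i$ of $e=(0,\ldots,0,1)$, as well as the modular-character computation via $\Delta_{P_{k,n}/P^x_{k,n}} = \Delta_{P_{k,n}}|_{P^x_{k,n}} \Delta_{P^x_{k,n}}^{-1}$ with cancellation over $\{i:\alpha_i=1\}$, coincide step for step with the published argument.
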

\begin{proof}
\textbf{1.} Consider the action of $G_n$ by right-multiplication on the row vector space $F^n$. The semisimple operator $x$ gives rise to a decomposition of $F^n= \oplus_j V_j$ to a direct sum of vector spaces, such that $V_j$ has an $E_j = F(\zeta_j)$-vector space structure for a finite field extension $F<E_j$, and $x|_{V_j}$ is acting as multiplication by $\zeta_j$. Grouping isomorphic extensions, we can assume the actions of the $\zeta_{j}$'s are not isomorphic for distinct $j$'s. Then it is easy to see that such a decomposition of $F^n$ gives rise to a decomposition of the conjugation action space $\left(G_n^x,\mathbf{M}_{n,n}(F)^{Ad(x)}\right)$ into action spaces of the form $(\mathbf{GL}_{l_j}(E_j), \mathbf{M}_{l_j,l_j}(E_j))$, with $l_j\leq n$. It remains to see what happens when we add the requirement for commutation relations with $\epsilon_{k,n}$.
\par Since $x \in X_{k,n}$, we have $\epsilon_{k,n}x\epsilon_{k,n} =\theta_{k,n}(x) = x^{-1}$. Thus, the action of $\epsilon_{k,n}$ on $F^n$ must permute the $V_j$'s. Since $\epsilon_{k,n}$ is an involution, we conclude that $\left(H_{k,n}^x,\mathbf{L}_{k,n}^{Ad(x)}(F)\right)$ decomposes as a product of spaces either of the form 
$$\left((\mathbf{GL}_{l_{j_1}}(E_{j_1})\times \mathbf{GL}_{l_{j_2}}(E_{j_2}))^{\epsilon_{k,n}}, (\mathbf{M}_{l_{j_1},l_{j_1}}(E_{j_1})\times \mathbf{M}_{l_{j_2},l_{j_2}}(E_{j_2}))^{\widehat{Ad(\epsilon_{k,n})}}\right)$$
where $j_1\neq j_2$ are such that $\epsilon_{k,n}(V_{j_1}) = V_{j_2}$, or of the form
$$\left(\mathbf{GL}_{l_{j}}(E_{j})^{\epsilon_{k,n}}, \mathbf{M}_{l_{j},l_{j}}(E_{j})^{\widehat{Ad(\epsilon_{k,n})}}\right)$$
where $V_j$ is $\epsilon_{k,n}$-invariant.
Let us show that all of these are isomorphic to the action spaces described in the statement.
\par In the former case (\textit{case (1)}), the acting group is clearly given by $\{(g, \epsilon_{k,n}g\epsilon_{k,n}|_{V_{j_2}})\,:\, g\in \mathbf{GL}_{l_{j_1}}(E_{j_1})\}$, while the space is $\{(A, -\epsilon_{k,n}A\epsilon_{k,n}|_{V_{j_2}})\,:\, A\in \mathbf{M}_{l_{j_1},l_{j_1}}(E_{j_1})\}$ . Hence, the situation is isomorphic to $\mathbf{GL}_{l_{j_1}}(E_{j_1})$ acting on $\mathbf{M}_{l_{j_1},l_{j_1}}(E_{j_1})$.
\par The latter case should itself be separated into two cases. First, assume $x^{-1}|_{V_j} = \theta_{k,n}(x)|_{V_j} \neq x|_{V_j}$ (\textit{case (2)}). Then, on $V_j$, $\theta_{k,n}$ serves as a non-trivial $F$-linear involution of $E_j$. Let $F<L_j<E_j$ be its fixed sub-field ($[E_j:L_j]=2$). This means we can write $V_j\cong E_j\otimes_{L_j} W_j$ for an $L_j$-subspace $W_j\subset V_j$, with $\epsilon_{k,n}$ acting as the non-trivial Galois automorphism in $Gal(E_j/L_j)$ on the $L_j$ component. In these terms, we have
$$ \mathbf{M}_{l_{j},l_{j}}(E_{j})^{\widehat{Ad(\epsilon_{k,n})}} \cong (E_j\otimes_{L_j}\mathbf{M}_{l_{j},l_{j}}(L_{j}))^{\widehat{Ad(\epsilon_{k,n})}} = \tau\otimes_{L_j}\mathbf{M}_{l_{j},l_{j}}(L_{j})$$
where $\tau\in L_i$ is an element for which $\epsilon_{k,n}(\tau\otimes w) = -\tau\otimes w$. Similarly, we have $\mathbf{GL}_{l_{j}}(E_{j})^{\epsilon_{k,n}}\cong \mathbf{GL}_{l_{j}}(L_{j})$, and the action space is isomorphic to $(\mathbf{GL}_{l_j}(L_j), \mathbf{M}_{l_j,l_j}(L_j))$.
\par Finally, still in the case of $\left(\mathbf{GL}_{l_{j}}(E_{j})^{\epsilon_{k,n}}, \mathbf{M}_{l_{j},l_{j}}(E_{j})^{\widehat{Ad(\epsilon_{k,n})}}\right)$, if $x|_{V_j}= x^{-1}|_{V_j}$ (\textit{case (3)}), then $x=\pm I_{V_j}$. That means at once that $E_j = F$ and that there are at most two of such factors in the product. Furthermore, we see that under a suitable change of basis, $\epsilon_{k,n}|_{V_j}$ equals $\epsilon_{m_j,l_j}$. Since $m_j$ is the dimension of the eigenspace of $\epsilon_{k,n}|_{V_j}$ corresponding to the eigenvalue $1$, it obviously cannot exceed $k$.\\

\par \textbf{2.} Let $\{v_j\in V_j\}$ be the decomposition of the vector $e=(0,\ldots,0,1)\in F^n = \oplus_j V_j$. Since the subspaces $V_j$ are invariant under the $H^x_{k,n}$-action on $F^n$ and since $P^x_{k,n}$ is the stabilizer of $e$, the subgroup can also be described as the intersection of the stabilizers of the vectors $v_j$ inside $H^x_{k,n}$. This clearly means that $P^x_{k,n}$ is an admissible subgroup for the above product decomposition. It is left to recognize the stabilizer subgroups of $v_j$ in each of the three cases appearing in the product. 
\par In case (1), since $\epsilon_{k,n}(e) = -e$, we see that $v_{j_1} = -v_{j_2}$. If $v_{j_1}=0$, the stabilizer is clearly the whole acting group, and we set $\alpha_i= 0$ for the $i$ with which we are dealing. Otherwise, we set $\alpha_i=1$ and the subgroup of $\mathbf{GL}_{l_{j_1}}(E_{j_1})$ which stabilizes $(v_{j_1},-v_{j_2})$ becomes $\mathbf{P}_{l_{j_1}}(E_{j_1})$, up to a change of basis.
\par In case $V_j$ is $\epsilon_{k,n}$-invariant, again we set $\alpha_i=0$ for the corresponding $i$ when $v_j=0$, and the stabilizer is then the whole acting group. Otherwise we set $\alpha_i=1$. In case (2), the condition $\epsilon_{k,n}(e)=-e$ imposes $v_j = \tau\otimes w$, whose stabilizer is again just a stabilizer of one non-zero vector in $\mathbf{GL}_{l_j}(L_{j})$, i.e.~isomorphic to $\mathbf{P}_{l_j}(L_j)$. 
\par As for the last case, it is easily seen that given a vector $v_j$ for which $\epsilon_{k,n}|_{V_j}(v_j) = -v_j$, a suitable change of basis exists such that $H_{m_j,l_j}$ remains at place while the stabilizer of $v_j$ becomes $P_{m_j,l_j}$.\\

\par \textbf{3.} Since $\mathbf{GL}_l(E)$ is a unimodular group and the modular character of $\mathbf{P}_l(E)$ is $|\det^{-1}|$, it is easy to see that $\Delta_{P^x_{k,n}}(g)= \prod_{i\;:\;\alpha_i=1} |\det(\widetilde{g_i})|^{-1}$, for all $g=(g_1,\ldots,g_s)\in \prod_{i=1}^s P^i\cong P^x_{k,n}$. Now, $\Delta_{P_{k,n}}^{-1}$ is given by the norm of the determinant of the lower block, that is, the determinant of the restriction of the operator to the $-1$ eigenspace of $\epsilon_{k,n}$. We need to compute it for each $P^i$. 
\par In case (1), we are interested in the restriction of $P^i$ to the space $\{(v,-\epsilon_{k,n}(v))\}_{v\in V_{j_1}}$, which is isomorphic to $\mathbf{GL}_{l_{j_1}}(E_{j_1})$. In case (2), we are looking on its action on the space $\tau\otimes W_j$ which gives the same conclusion. Finally, for case (3), the restriction is the projection on the right component of $H_{m_j,l_j}= G_{m_j}\times G_{l_j-m_j}$. 
\par Thus, in all cases we have $\Delta_{P_{k,n}}(g_i) = |\det( \widetilde{g_i})|^{-1}$, and the statement easily follows.

\end{proof}

The above decomposition can also be applied for the study of the geometry of the $P_{k,n}$-action on $X_{k,n}$ in the following way.

\begin{prop}\label{fin-orbit}
Let $\mathcal{O}$ be a closed $H_{k,n}$-orbit in $X_{k,n}$. Then, $\mathcal{O}$ splits into a finite number of $P_{k,n}$-orbits, with a unique open (in $\mathcal{O}$) orbit $A\subset \mathcal{O}$. Furthermore, $A$ consists exactly of those $x\in \mathcal{O}$ for which $\alpha(x) = (1,\ldots,1)$.
\end{prop}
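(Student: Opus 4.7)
The plan is to fix a base point $x_0\in \mathcal{O}$, identify $\mathcal{O}$ with $H_{k,n}/H^{x_0}_{k,n}$, and classify the $P_{k,n}$-orbits in $\mathcal{O}$ via the double-coset space $P_{k,n}\backslash H_{k,n}/H^{x_0}_{k,n}$. Since $P_{k,n}$ is by definition the stabilizer in $H_{k,n}$ of $e=(0,\ldots,0,1)$ under the right action on $F^n$, the coset space $P_{k,n}\backslash H_{k,n}$ is $H_{k,n}$-equivariantly identified with the orbit $eH_{k,n}=\{0\}\oplus (F^{n-k}\setminus\{0\})$. Under this identification, $P_{k,n}$-orbits in $\mathcal{O}$ correspond to $H^{x_0}_{k,n}$-orbits on $eH_{k,n}$.

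Next I would feed in the product decomposition of Proposition \ref{structure} for the $H^{x_0}_{k,n}$-action on $F^n$. The vector $e$ lies in the $-1$-eigenspace of $\epsilon_{k,n}$ and decomposes as $e=\sum_i v_i$ with $v_i$ in the $-1$-eigenspace of the $i$-th factor. A case-by-case inspection following the three cases in the proof of Proposition \ref{structure} shows that in each factor there are exactly two orbits on the $-1$-eigenspace summand: the trivial orbit $\{0\}$ and the set of all non-zero vectors. Concretely, case (1) uses transitivity of $\mathbf{GL}_{l_i}(E_i)$ on $E_i^{l_i}\setminus\{0\}$; case (2), after identifying the $-1$-eigenspace with $\tau\otimes W_j$, uses transitivity of $\mathbf{GL}_{l_j}(L_j)$ on $W_j\setminus\{0\}$; and case (3) uses transitivity of the $G_{l_j-m_j}$-factor of $H_{m_j,l_j}$ on the non-zero vectors of the $-1$-eigenspace of $\epsilon_{m_j,l_j}$.

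Combining the factors, each $H^{x_0}_{k,n}$-orbit on $eH_{k,n}$ is completely determined by the pattern in $\{0,1\}^s$ recording which $v_i$ are non-zero, yielding the finiteness statement. Matching the pattern with the definition of $\alpha(y)$ from Proposition \ref{structure} identifies the orbit $A$ with the locus $\{y\in\mathcal{O}:\alpha(y)=(1,\ldots,1)\}$.

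The remaining step is openness and uniqueness of $A$. Each condition $\alpha_i(y)=1$ translates, through the identifications above, into the non-vanishing of the $V_i$-component of $eg$ when $y=g\cdot x_0$; this is an open, right-$H^{x_0}_{k,n}$-invariant condition on $g\in H_{k,n}$, and hence descends to an open condition on $y\in\mathcal{O}$. Thus $A$ is a finite intersection of open sets and is itself open. Its complement is a union of the proper closed subsets $\{\alpha_i=0\}$, each of which is cut out by a non-trivial linear vanishing condition and therefore has empty interior, so every other orbit has empty interior and $A$ is the unique open orbit. The step I expect to require the most care is the bookkeeping that matches the intrinsically-defined invariant $\alpha(y)$ with the coset-space pattern based at $x_0$, but this is a direct unfolding of Proposition \ref{structure} rather than a new idea.
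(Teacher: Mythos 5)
Your argument is correct and follows essentially the same route as the paper's: both pass from $P_{k,n}$-orbits on $\mathcal{O}\cong H_{k,n}/H^{x_0}_{k,n}$ to $H^{x_0}_{k,n}$-orbits on $P_{k,n}\backslash H_{k,n}\cong F^{n-k}\setminus\{0\}$, then invoke the case-by-case decomposition of Proposition \ref{structure} to see that orbits are indexed by the nullity pattern of the components of $e$, and finally match that pattern with $\alpha(\cdot)$. The only thing worth flagging is that you should explicitly record (via Proposition \ref{closed}) that $x_0$ is semisimple before applying Proposition \ref{structure}; otherwise your slightly more detailed openness/empty-interior argument is a fine substitute for the paper's terser count of $2^s-1$ orbits with one open orbit.
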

\begin{proof}
Since $H^x_{k,n}$ is the stabilizer of $x\in \mathcal{O}$ in $H_{k,n}$, we can fix one $x\in \mathcal{O}$ and identify $\mathcal{O}\cong H_{k,n}/H^x_{k,n}$. Note, that instead of counting orbits of $P_{n,k}$ on $H_{n,k}/H^x_{k,n}$, we can count the orbits of the right action of $H^x_{k,n}$ on the space $P_{n,k}\backslash H_{k,n}$. Consider $F^{n-k}\setminus \{0\}$ as a row space on which $H_{k,n}$ acts transitively by right matrix multiplication with its lower block. Then, $P_{k,n}$ will be the stabilizer of $e=(0,\ldots,0,1)$, and $F^{n-k}\setminus \{0\}\cong P_{n,k}\backslash H_{n,k}$. 
\par By Proposition \ref{closed}, $x$ is a semisimple matrix, hence, we can apply the previous proposition. Since $F^{n-k}$ lies inside $F^{n}$ as the eigenspace of $\epsilon_{k,n}$ related to the value $-1$, we can use the same reasoning as in the previous proof to identify it as the set of vectors of the form $((w_i,-\epsilon_{k,n}(w_i)),(\tau\otimes w_i), w_i)$, with the notation that corresponds to the 3 cases classification of the previous proof. We are left to count the non-zero orbits of $H^x_{k,n}$ on this subspace. 
\par Clearly, these are determined by the nullity of each of $\{w_i\}_{i=1}^s$, and we see that there are exactly $2^s-1$ such orbits. In particular, there is only one open orbit among them, namely, the set of vectors for which all $w_i\neq 0$. We denote by $A$ the corresponding unique open orbit in $\mathcal{O}$.
\par The $P_{k,n}$-orbit of $x$ inside $\mathcal{O}$ corresponds to the $H^x_{k,n}$-orbit of the identity in $P_{n,k}\backslash H_{n,k}$, or in other words, of the vector $e$ in $F^{n-k}$. Thus, the question of whether $x$ belongs to $A$ is equivalent to asking whether the components of $e$ in the $\{w_i\}$ (or $\oplus_j V_j$) decomposition are all non-zero. This is equivalent to the condition $\alpha(x)=(1,\ldots,1)$ from the way we defined $\alpha(x)$.
\end{proof}

\subsection{Distributions on $X_{k,n}$}
\begin{cor}\label{first-cor}
If $A$ is the open $P_{k,n}$-orbit inside a closed $H_{k,n}$-orbit $\mathcal{O}\subset X_{k,n}$ and $x\not\in A$, then $\mathcal{D}(N_x\mathcal{O})^{P^x_{k,n},\Delta_{P_{k,n}/P^x_{k,n}}^{-1}}$ is trivial.
\end{cor}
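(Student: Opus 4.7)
The plan is to leverage the product structure from Proposition \ref{structure} together with Proposition \ref{fin-orbit} to reduce the statement to a vanishing claim in a single factor, then kill that factor by a standard central-element argument.

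Since $x \notin A$, Proposition \ref{fin-orbit} gives us an index $1 \leq i_0 \leq s$ with $\alpha_{i_0}(x) = 0$. By Proposition \ref{closed}, $N_x \mathcal{O}$ is identified with $\mathbf{L}^{Ad(x)}_{k,n}(F)$ under the conjugation action of $P^x_{k,n}$. Under the decomposition of Proposition \ref{structure}, this action space becomes a product $\prod_{i=1}^s (P^i, X_i)$ where $X_i$ is $\mathbf{M}_{l_i,l_i}(E_i)$ or $\mathbf{L}_{m_i,l_i}(F)$, and the character $\chi := \Delta^{-1}_{P_{k,n}/P^x_{k,n}}$ factors as a product $\chi(g_1,\ldots,g_s) = \prod_i \chi_i(g_i)$, with $\chi_i \equiv 1$ whenever $\alpha_i = 1$ and $\chi_i(g_i) = |\det \widetilde{g_i}|$ whenever $\alpha_i = 0$.

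Next I would reduce to a single factor. Since $\mathcal{S}(\prod_i X_i) = \bigotimes_i \mathcal{S}(X_i)$ for lctd spaces, any $(P^x_{k,n}, \chi)$-semi-invariant distribution $T$ on $N_x\mathcal{O}$ restricts, upon pairing with elementary tensors, to yield distributions on $X_{i_0}$ that are $(P^{i_0}, \chi_{i_0})$-semi-invariant. Hence it suffices to show
$$\mathcal{D}(X_{i_0})^{P^{i_0}, \chi_{i_0}} = 0.$$

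The core observation is then the following: in each of the possibilities for an index with $\alpha_{i_0}=0$, the group $P^{i_0}$ contains a central one-parameter subgroup which acts trivially by conjugation yet on which $\chi_{i_0}$ is non-trivial. Explicitly, in cases (1) and (2) (where $P^{i_0} = \mathbf{GL}_{l_{i_0}}(E_{i_0})$ with $E_{i_0}$ either the original extension or its subfield $L_{j}$), the scalars $\lambda I_{l_{i_0}}$ act as the identity, whereas $\chi_{i_0}(\lambda I_{l_{i_0}}) = |\lambda|^{l_{i_0}}$; in case (3) (where $P^{i_0} = H_{m_{i_0}, l_{i_0}}$), the scalars $(\lambda I_{m_{i_0}}, \lambda I_{l_{i_0}-m_{i_0}})$ act trivially, while $\chi_{i_0}$ evaluates to $|\lambda|^{l_{i_0}-m_{i_0}}$, which is again non-constant as $l_{i_0} > m_{i_0}$. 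Thus any $(P^{i_0}, \chi_{i_0})$-semi-invariant distribution $T$ on $X_{i_0}$ satisfies $T = \chi_{i_0}(\lambda) T$ for all $\lambda \in F^\times$, forcing $T = 0$ and completing the proof.

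The main point to be careful about is the product reduction (step two) — one must confirm that for the spaces in question $\mathcal{S}(\prod X_i) = \bigotimes \mathcal{S}(X_i)$ so that pairing a semi-invariant distribution against Schwartz functions on all factors but one produces a genuine semi-invariant on the remaining factor — but this is a standard fact for lctd spaces and poses no real obstacle. The verification that $l_{i_0} > m_{i_0}$ in case (3) is built into the standing convention that $H_{m,l}$ is defined only for $m < l$, so no degenerate sub-case arises.
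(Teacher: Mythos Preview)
Your proof is correct and follows essentially the same approach as the paper: both pick an index $i_0$ with $\alpha_{i_0}=0$, then use a scalar element of the corresponding factor $P^{i_0}$, which acts trivially by conjugation yet has non-trivial value under the relative modular character, to force any semi-invariant distribution to vanish.

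The one difference is that your tensor-product reduction to a single factor is unnecessary. The paper simply notes that the subgroup $M\cong P^{i_0}$ sits inside $P^x_{k,n}$ and that a scalar $g=\omega I\in M$ acts trivially on the \emph{entire} space $\mathbf{L}_{k,n}^{Ad(x)}(F)$ (it acts trivially on its own factor by conjugation, and trivially on the other factors because the product action makes $M$ act only on $X_{i_0}$). Since $\Delta_{P_{k,n}/P^x_{k,n}}(g)\neq 1$, the semi-invariance equation $T=\Delta^{-1}(g)T$ already kills $T$ globally, with no need to pass through $\mathcal{S}(\prod X_i)=\bigotimes\mathcal{S}(X_i)$. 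Your route is not wrong, just a step longer. (A minor imprecision: in cases (1) and (2) the exponent in $\chi_{i_0}(\lambda I)$ should account for $[E_{i_0}:F]$ if $\det$ is taken $F$-linearly, but this does not affect non-triviality.)
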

\begin{proof}
First by Proposition \ref{closed}, we may prove that $\mathcal{D}(\mathbf{L}_{k,n}^{Ad(x)}(F))^{P^x_{k,n},\Delta_{P_{k,n}/P^x_{k,n}}^{-1}}$ is trivial.  Since $x\not\in A$, it must have some index $\alpha_i=0$. As observed in Proposition \ref{structure}, it follows there must be a subgroup $M$ inside $P^x_{k,n}$ isomorphic to either $\mathbf{GL}_{l_i}(E)$ or $H_{m_i,l_i}$ whose action on $\mathbf{L}_{k,n}^{Ad(x)}(F)$ is isomorphic to a conjugation action on a specified matrix space. Note also, that by Proposition \ref{structure}.3, $\Delta_{P_{k,n}/P_x}(g) = |\det(\tilde{g})|$ for $g\in M$, with $\tilde{g}$ defined as in the mention proposition. Take $g= \omega I\in M$ for some $\omega \in F$ with $|\omega|>1$. Then it is clear that $\Delta_{P_{k,n}/P_x}(g)\not=1$, while $g$, being a scalar operator, must act trivially on $\mathbf{L}_{k,n}^{Ad(x)}(F)$. This shows there cannot be any non-trivial $P^x_{k,n},\Delta_{P_{k,n}/P_x}^{-1}$-invariant distributions on that space.
\end{proof}

We are now ready to claim the first distributional reduction of the main problem.

\begin{lem}\label{cruc}
Let $k,n$ be positive integers such that $k <n-1$. Suppose that for every semisimple matrix $x\in X_{k,n}$ the equality
$$\mathcal{D}\left(\mathbf{L}_{k,n}^{Ad(x)}(F)\right)^{P^x_{k,n}} = \mathcal{D}\left(\mathbf{L}_{k,n}^{Ad(x)}(F)\right)^{H^x_{k,n}}$$
holds, where the group action is by conjugation. Then, we also have
$$\mathcal{D}(X_{k,n})^{P_{k,n}}= \mathcal{D}(X_{k,n})^{H_{k,n}}$$
\end{lem}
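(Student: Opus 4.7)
The plan is to localize around each closed $H_{k,n}$-orbit in $X_{k,n}$ using Luna's slice theorem (Theorem \ref{luna}), reduce the resulting local statement to the assumed linear equality via Proposition \ref{abstract-lem}, and patch the local equalities together using Proposition \ref{covered}. The inclusion $\mathcal{D}(X_{k,n})^{H_{k,n}}\subset \mathcal{D}(X_{k,n})^{P_{k,n}}$ is immediate, so only the reverse needs work.

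First I would fix a closed $H_{k,n}$-orbit $\mathcal{O}\subset X_{k,n}$. Under the assumption $k<n-1$, Proposition \ref{fin-orbit} shows $\mathcal{O}$ splits into finitely many $P_{k,n}$-orbits with a unique open one $A\subset \mathcal{O}$, while Proposition \ref{closed} guarantees that each $y\in\mathcal{O}$ is semisimple and identifies $N_y\mathcal{O}$ with $\mathbf{L}_{k,n}^{Ad(y)}(F)$ as an $H^y_{k,n}$-space. Picking $x\in A$, Luna's theorem produces an open $H_{k,n}$-invariant neighborhood $\mathcal{O}\subset U\subset X_{k,n}$, an $H_{k,n}$-equivariant continuous retract $p\colon U\to \mathcal{O}$, and an $H^x_{k,n}$-equivariant $F$-analytic embedding $\psi\colon p^{-1}(x)\hookrightarrow \mathbf{L}_{k,n}^{Ad(x)}(F)$ whose image $W$ is saturated. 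I would then invoke Proposition \ref{abstract-lem} with $(G,P,\mathcal{X},\mathcal{O},p)=(H_{k,n},P_{k,n},U,\mathcal{O},p)$ and $\mathcal{U}=A$. Conditions (i)--(iii) are formal: transitivity of $H_{k,n}$ on $\mathcal{O}$ is tautological, the reductivity and hence unimodularity of $H_{k,n}$ and $H^x_{k,n}$ produces a non-zero $H_{k,n}$-invariant measure on $\mathcal{O}$, and Proposition \ref{fin-orbit} supplies the finiteness of the $P_{k,n}$-orbit decomposition. Condition (v) follows from Corollary \ref{first-cor} at each $y\in \mathcal{O}\setminus A$: the central scalar $\omega I$ constructed there acts trivially on any $P^y_{k,n}$-invariant subset of $\mathbf{L}_{k,n}^{Ad(y)}(F)$ while $\Delta_{P_{k,n}/P^y_{k,n}}^{-1}$ remains non-trivial, so the same character argument kills every $(P^y_{k,n},\Delta_{P_{k,n}/P^y_{k,n}}^{-1})$-invariant distribution on the open subset $\psi(p^{-1}(y))$. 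Condition (iv) is exactly the place where the standing hypothesis of the lemma is used, transferred along $\psi$ from $\mathbf{L}_{k,n}^{Ad(x)}(F)$ to its saturated open image $W$.

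Proposition \ref{abstract-lem} then outputs $\mathcal{D}(U)^{P_{k,n}}=\mathcal{D}(U)^{H_{k,n}}$. Writing $U_\mathcal{O}$ for the neighborhood just constructed and letting $\mathcal{O}$ vary over all closed $H_{k,n}$-orbits, $\bigcup_\mathcal{O} U_\mathcal{O}$ is an $H_{k,n}$-invariant open subset of $X_{k,n}$ containing every closed $H_{k,n}$-orbit. Identifying $X_{k,n}$ with the $F$-points of the affine symmetric variety $\mathbf{GL}_n/(\mathbf{GL}_k\times \mathbf{GL}_{n-k})$, Proposition \ref{covered} forces $\bigcup_\mathcal{O} U_\mathcal{O}=X_{k,n}$. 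Any $T\in \mathcal{D}(X_{k,n})^{P_{k,n}}$ therefore restricts to an $H_{k,n}$-invariant distribution on each $U_\mathcal{O}$, and since $H_{k,n}$-invariance of a distribution is a local property relative to any $H_{k,n}$-invariant open cover, $T$ is $H_{k,n}$-invariant on all of $X_{k,n}$.

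The step I expect to require the most care is the passage in condition (iv) from the assumed equality on the full linear space $\mathbf{L}_{k,n}^{Ad(x)}(F)$ to its saturated open image $W$. This is handled by the standard base-localization argument attached to saturated subsets: if $\pi\colon \mathbf{L}_{k,n}^{Ad(x)}(F)\to F^m$ is the $H^x_{k,n}$-invariant continuous map exhibiting $W=\pi^{-1}(\Omega)$, then for every compact open $K\subset \Omega$ the characteristic function $\chi_{\pi^{-1}(K)}$ is $H^x_{k,n}$-invariant, locally constant, and compactly supported inside $W$, so a $P^x_{k,n}$-invariant distribution $T$ on $W$ multiplied by $\chi_{\pi^{-1}(K)}$ extends by zero to a $P^x_{k,n}$-invariant distribution on the full linear space; the hypothesis then upgrades it to $H^x_{k,n}$-invariant, and since every $f\in \mathcal{S}(W)$ is supported in some $\pi^{-1}(K)\subset W$, the $H^x_{k,n}$-invariance of $T$ itself follows by using that $\pi$ sends $h\cdot\operatorname{supp}f$ into $K$ for every $h\in H^x_{k,n}$. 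Once this saturation step is granted, every remaining ingredient is a formal consequence of the machinery already assembled in Sections 2 and 3.
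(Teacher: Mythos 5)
Your proof follows essentially the same route as the paper: Luna's slice theorem around each closed $H_{k,n}$-orbit, Proposition \ref{abstract-lem} applied to the retract with conditions (iv) and (v) supplied by the hypothesis (transferred along the saturated embedding) and by Corollary \ref{first-cor} respectively, and a patching step via Proposition \ref{covered}. One small slip worth noting: $\chi_{\pi^{-1}(K)}$ is not compactly supported (the fibers of $\pi$ need not be compact); the argument goes through because $\pi^{-1}(K)$ is open \emph{and closed}, which is exactly what is needed to extend the cutoff distribution by zero.
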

\begin{proof}
Let $\mathcal{O}$ be a closed $H_{k,n}$-orbit in $X_{k,n}$. Since $X_{k,n}\cong G_n/H_{k,n}$, we can consider it as an open and closed subset of $Y:=\mathbf{GL}_n/\mathbf{GL}_k\times \mathbf{GL}_{n-k}(F)$. By the Luna's slice theorem (Theorem \ref{luna}), there exists an open $H_{k,n}$-invariant subset $\mathcal{O}\subset U_{\mathcal{O}}\subset Y$, together with an $H_{k,n}$-equivariant retract $p:U_{\mathcal{O}}\to \mathcal{O}$ possessing certain properties. By Proposition \ref{covered}, the union of all such $U_\mathcal{O}$ over all closed orbits $\mathcal{O}$, together with $Y\setminus X_{k,n}$, cover $Y$. Hence, $X_{k,n}\subset \bigcup_{\mathcal{O}} U_\mathcal{O}$. 
\par If we show $\mathcal{D}(U_\mathcal{O})^{P_{k,n}} = \mathcal{D}(U_{\mathcal{O}})^{H_{k,n}}$ for every closed $H_{k,n}$-orbit $\mathcal{O}$, we will know that $\mathcal{D}(\bigcup_{\mathcal{O}} U_\mathcal{O})^{P_{k,n}} = \mathcal{D}(\bigcup_{\mathcal{O}} U_{\mathcal{O}})^{H_{k,n}}$, for example, by \cite[Lemma 3.2]{off}. That will prove the statement, since $X_{k,n}$ is closed and open in the union.
\par Let $A\subset \mathcal{O}$ be the open $P_{k,n}$-orbit which was shown to exist in Proposition \ref{fin-orbit}, and choose $x\in A$. As usual $\mathcal{O}\cong H_{k,n}/H^x_{k,n}$, and $x$ is semisimple by Proposition \ref{closed}. The result will be achieved by applying Proposition \ref{abstract-lem} on the retract $p$ and the groups $P_{k,n}<H_{k,n}$. Indeed, condition (ii) holds because $H_{k,n}$ and $H^x_{k,n}$ are unimodular (this is a consequence of Proposition \ref{structure}.1), and (iii) holds because of Proposition \ref{fin-orbit}. We are left to show that conditions (iv) and (v) hold for $x$.
\par Recall that Luna's slice comes with an $H^x_{k,n}$-equivariant embedding $\iota:p^{-1}(x)\to N_x$ whose image is open and saturated. Hence, there is an $H^x_{k,n}$-invariant map $\pi:N_x\mathcal{O}\to F^m$, and an open $V\subset F^m$ such that $i(p^{-1}(x)) = \pi^{-1}(V)$. Consider $\xi\in \mathcal{D}(p^{-1}(x))^{P^x_{k,n}}$, and suppose there is an $h\in H^x_{k,n}$ with $\xi\not=\xi^h$. Pick $t\in Supp(\xi-\xi^h)$ and an open compact neighborhood $B$ of $\pi(\iota(t))$ in $V$. Then, $\pi^{-1}(B)$ is an open and closed subset of $N_x\mathcal{O}$ that contains $\iota(t)$ and contained in $\iota(p^{-1}(x))$. Now, we have a well-defined operator $\alpha_B:\mathcal{D}(p^{-1}(x))\to \mathcal{D}(N_x\mathcal{O})$ given by $\alpha_B(\eta)(f) = \eta((f\circ \iota)|_{\pi^{-1}(B)})$ for $f\in \mathcal{S}(N_x\mathcal{O})$. Hence, $\alpha_B(\xi)\in \mathcal{D}(N_x\mathcal{O})^{P^x_{k,n}}$, but by Proposition \ref{closed} and our assumption this is the same as $\alpha_B(\xi)\in \mathcal{D}(N_x\mathcal{O})^{H^x_{k,n}}$. Therefore, $\alpha_B(\xi^h) = \alpha_B(\xi)^h = \alpha_B(\xi)$, which gives $\alpha_B(\xi-\xi^h)=0$. This contradicts the clear fact that $\iota(t)\in Supp(\alpha_B(\xi-\xi^h))$. Finally, $\mathcal{D}(p^{-1}(x))^{P^x_{k,n}}= \mathcal{D}(p^{-1}(x))^{H^x_{k,n}}$ holds and condition (iv) of Proposition \ref{abstract-lem} is satisfied.
\par With similar arguments, we can deduce condition (v) from Corollary \ref{first-cor} by noting that $p^{-1}(y)$ can be embedded with an open saturated image inside $N_y\mathcal{O}$ for all $y\in \mathcal{O}$, and directly using \cite[Lemma 3.1.3]{aiz-gur1}.
\end{proof}

\begin{thm}\label{main-reduction}
Let $k,n$ be positive integers such that $k <n-1$. Suppose that the equality
$$\mathcal{D}\left(\mathbf{L}_{m,l}(F)\right)^{P_{m,l}} = \mathcal{D}\left(\mathbf{L}_{m,l}(F)\right)^{H_{m,l}}$$
holds, for all positive integers $m\leq k$ and $l\leq n$. Then, we also have 
$$\mathcal{D}(X_{k,n})^{P_{k,n}}= \mathcal{D}(X_{k,n})^{H_{k,n}}$$
\end{thm}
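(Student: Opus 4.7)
The strategy is to apply Lemma \ref{cruc} and then dispatch the resulting linearized problem factor by factor, using the decomposition of Proposition \ref{structure}. By Lemma \ref{cruc}, it suffices to prove that for every semisimple $x \in X_{k,n}$,
$$\mathcal{D}(\mathbf{L}_{k,n}^{Ad(x)}(F))^{P^x_{k,n}} = \mathcal{D}(\mathbf{L}_{k,n}^{Ad(x)}(F))^{H^x_{k,n}}.$$
Proposition \ref{structure} presents both sides of this equality as products of action spaces indexed by $1\leq i\leq s$. The factors associated to $H^x_{k,n}$ are of two types: Case I, with $(\mathbf{GL}_{l_i}(E_i),\mathbf{M}_{l_i,l_i}(E_i))$ for $1\leq i\leq t$; and Case II, with $(H_{m_i,l_i},\mathbf{L}_{m_i,l_i}(F))$ for $t < i \leq s$. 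The corresponding factor of $P^x_{k,n}$ is either the full group (when $\alpha_i = 0$) or the mirabolic-type subgroup listed in Proposition \ref{structure}.2 (when $\alpha_i = 1$).

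I would next show that in each single factor, the subgroup yields the same invariant distributions as the full group. When $\alpha_i = 0$ there is nothing to prove. For Case I with $\alpha_i=1$, one invokes Bernstein's theorem from \cite{bern} applied over the local field $E_i$, which gives exactly $\mathcal{D}(\mathbf{M}_{l_i,l_i}(E_i))^{\mathbf{P}_{l_i}(E_i)} = \mathcal{D}(\mathbf{M}_{l_i,l_i}(E_i))^{\mathbf{GL}_{l_i}(E_i)}$. For Case II with $\alpha_i = 1$, the bounds $m_i \leq k$ and $l_i \leq n$ recorded in Proposition \ref{structure}.1 ensure that the hypothesis of the theorem applies to the pair $(m_i, l_i)$, yielding the desired equality.

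The remaining step is to assemble the single-factor equalities into the full statement for the product action $(\prod P^i, \prod X^i)$. Here I would use the standard algebraic factorization of the Schwartz space for lctd spaces, $\mathcal{S}(\prod X^i) = \bigotimes \mathcal{S}(X^i)$, which holds because every locally constant compactly supported function on the product is a finite linear combination of pure tensors of characteristic functions of compact open boxes. Given a $\prod P^i$-invariant distribution $T$, one fixes all variables but one, which produces a distribution on a single factor $X^j$ that inherits $P^j$-invariance and is therefore $G^j$-invariant by the per-factor result. Iterating this coordinate by coordinate promotes $T$ to full $\prod G^i$-invariance.

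The main step demanding care is this product assembly, since $\mathcal{D}(\prod X^i)$ is not a literal tensor product of the factor distribution spaces; the argument must be executed at the level of test functions and then transferred to the distribution side one variable at a time. A minor point worth verifying is that Bernstein's theorem indeed holds verbatim over any non-archimedean local field $E_i$, not just the base field $F$, which is standard but must be noted when closing Case I.
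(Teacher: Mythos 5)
Your proof is correct and takes essentially the same route as the paper: apply Lemma \ref{cruc}, decompose via Proposition \ref{structure}, close the $\mathbf{GL}_l(E)$ factors with Bernstein's theorem and the $H_{m,l}$ factors with the hypothesis, and assemble by the product-of-action-spaces principle. The only surface difference is that the paper cites an external reference for the product step, whereas you sketch the standard argument directly; your sketch (fixing one variable to obtain a distribution on a single factor with the inherited invariance, then iterating) is valid.
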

\begin{proof}
Suppose $\{G^i\}$ are finitely many lctd groups acting on lctd spaces $\{X^i\}$ respectively, and $H^i<G^i$ are fixed subgroups. Let $(H,X)$ be the product of all $\{(H^i,X^i)\}$'s, and $(G,X)$ the product of $\{(G^i,X^i)\}$'s. If $\mathcal{D}(X^i)^{P^i} = \mathcal{D}(X^i)^{H^i}$ for all $i$, then it is easy to check that $\mathcal{D}(X)^H = \mathcal{D}(X)^G$. See, for example \cite[3.1.5]{aiz-gur3}.
\par So, combining that reasoning with Proposition \ref{structure} and Lemma \ref{cruc}, it is enough to show that $\mathcal{D}(\mathbf{M}_{l,l}(E))^{\mathbf{P}_l(E)} = \mathcal{D}(\mathbf{M}_{l,l}(E))^{\mathbf{GL}_l(E)}$ and that $\mathcal{D}\left(\mathbf{L}_{m,l}(F)\right)^{P_{m,l}} = \mathcal{D}\left(\mathbf{L}_{m,l}(F)\right)^{H_{m,l}}$, for certain finite field extensions $F<E$ and integers $m\leq k$, $l\leq n$.
\par The former equality was proved by Bernstein in \cite{bern}, while the latter is the assumption, when $m>0$. When $m=0$ there is, of course, nothing to prove.
\end{proof}

\subsection{Reduction to nilpotents}
Let $\mathcal{N}_{k,n}\subset \mathbf{L}_{k,n}(F)$ be the cone of nilpotent matrices. 
\par Before moving to tackle the equality of distribution spaces on the linear space, we would like to reduce our problem further to that of distribution spaces on $\mathcal{N}_{k,n}$. For that cause, we apply some of the ideas which appeared in  \cite[Theorem 3.2.1]{aiz-gur1}.

\begin{lem}\label{closedisclosed}
The set $\mathcal{N}_{k,n}$ is closed in $\mathbf{L}_{k,n}(F)$, and every closed $H_{k,n}$-orbit in $\mathbf{L}_{k,n}(F)\setminus\mathcal{N}_{k,n}$ remains closed in $\mathbf{L}_{k,n}(F)$.
\end{lem}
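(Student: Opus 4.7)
The plan is to produce a single continuous $H_{k,n}$-invariant polynomial map $\pi : \mathbf{L}_{k,n}(F) \to F^n$ whose zero locus equals $\mathcal{N}_{k,n}$, and then deduce both conclusions from the existence of $\pi$. The natural choice is to send $A \in \mathbf{L}_{k,n}(F)$ to the tuple of non-leading coefficients of the characteristic polynomial $\det(tI - A)$. Since these coefficients are conjugation invariants on $\mathbf{M}_{n,n}(F)$, the map $\pi$ is invariant under the full $\mathbf{GL}_n$-conjugation action, and in particular under the $H_{k,n}$-action. A matrix is nilpotent precisely when its characteristic polynomial is $t^n$, so $\pi^{-1}(0) = \mathcal{N}_{k,n}$.

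Given this, the first assertion is immediate: $\mathcal{N}_{k,n} = \pi^{-1}(0)$ is the preimage of a point under a continuous map, hence closed in $\mathbf{L}_{k,n}(F)$.

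For the second assertion, I would proceed as follows. Let $\mathcal{O}$ be a closed $H_{k,n}$-orbit in the open set $U := \mathbf{L}_{k,n}(F) \setminus \mathcal{N}_{k,n}$. Fix any $A \in \mathcal{O}$ and set $v := \pi(A)$. Since $A$ is non-nilpotent, $v \neq 0$, and by $H_{k,n}$-invariance, $\pi \equiv v$ identically on $\mathcal{O}$. Continuity of $\pi$ then forces the ambient closure $\overline{\mathcal{O}}$ (taken inside $\mathbf{L}_{k,n}(F)$) to be contained in $\pi^{-1}(v)$, which is disjoint from $\pi^{-1}(0) = \mathcal{N}_{k,n}$. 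Hence $\overline{\mathcal{O}} \subseteq U$. Combined with the hypothesis that $\mathcal{O}$ is closed in $U$, this yields $\mathcal{O} = \overline{\mathcal{O}} \cap U = \overline{\mathcal{O}}$, so $\mathcal{O}$ is closed in $\mathbf{L}_{k,n}(F)$.

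I do not foresee any substantive obstacle here; the argument is a clean application of the fact that the non-leading coefficients of the characteristic polynomial separate the nilpotent cone from its complement, and it avoids invoking the full categorical quotient machinery of invariant theory.
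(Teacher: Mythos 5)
Your argument is correct and is essentially the paper's own proof: both use the continuous $H_{k,n}$-invariant map given by the coefficients of the characteristic polynomial, identify $\mathcal{N}_{k,n}$ as a single fiber, and observe that the closure of any orbit stays within its fiber, hence avoids $\mathcal{N}_{k,n}$. The only difference is cosmetic (you drop the constant leading coefficient where the paper writes $t^{-1}(1,0,\ldots,0)$), and your write-up fills in the short topological step $\mathcal{O} = \overline{\mathcal{O}}\cap U = \overline{\mathcal{O}}$ that the paper leaves implicit.
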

\begin{proof}
Let $t:\mathbf{L}_{k,n}(F)\to F^{n}$ be the map given by the coefficients of the characteristic polynomials of elements of $\mathbf{L}_{k,n}(F)$ (as matrices). Clearly, $t$ is $H_{k,n}$-invariant, and $\mathcal{N}_{k,n}= t^{-1}(1,0,\ldots,0)$. Every $H_{k,n}$-orbit is contained in a single fiber of $t$, hence, its closure remains in that fiber.
\end{proof}

\begin{prop}\label{redtonil}
Let $k,n$ be positive integers such that $k <n-1$. Suppose that the equality
$$\mathcal{D}\left(\mathbf{L}_{m,l}(F)\right)^{P_{m,l}} = \mathcal{D}\left(\mathbf{L}_{m,l}(F)\right)^{H_{m,l}}$$
holds, for all positive integers $m\leq k$ and $l< n$. Then,
$$\mathcal{D}(\mathbf{L}_{k,n}(F)\setminus \mathcal{N}_{k,n})^{P_{k,n}}=\mathcal{D}(\mathbf{L}_{k,n}(F)\setminus \mathcal{N}_{k,n})^{H_{k,n}}.$$
\end{prop}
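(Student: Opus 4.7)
The plan is to mimic the proof of Lemma \ref{cruc}, replacing $X_{k,n}$ by the non-nilpotent locus $\mathbf{L}_{k,n}(F)\setminus \mathcal{N}_{k,n}$. The decisive new feature is that every semisimple $x$ in this locus carries at least one non-zero eigenvalue, which will force the linear-type factors arising in the slice decomposition to have size strictly less than $n$, making the inductive hypothesis applicable.

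By Lemma \ref{closedisclosed}, every closed $H_{k,n}$-orbit $\mathcal{O}$ inside $\mathbf{L}_{k,n}(F)\setminus\mathcal{N}_{k,n}$ stays closed in $\mathbf{L}_{k,n}(F)$, so Luna's Slice Theorem (Theorem \ref{luna}) yields $H_{k,n}$-invariant neighbourhoods $U_{\mathcal{O}}$ together with retracts $p:U_{\mathcal{O}}\to \mathcal{O}$ and open saturated $H_{k,n}^x$-equivariant slice embeddings $p^{-1}(x)\hookrightarrow N_x\mathcal{O}$. Proposition \ref{covered} ensures that $\bigcup_{\mathcal{O}} U_{\mathcal{O}}$ covers $\mathbf{L}_{k,n}(F)\setminus\mathcal{N}_{k,n}$, and \cite[Lemma 3.2]{off} reduces the desired equality to each $U_{\mathcal{O}}$ in turn. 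On a fixed $U_{\mathcal{O}}$ I apply Proposition \ref{abstract-lem} to the pair $P_{k,n}<H_{k,n}$ acting on $p$: conditions (i)--(iii) follow exactly as in Lemma \ref{cruc}, using unimodularity of $H_{k,n}$ and $H_{k,n}^x$, together with a direct linear analogue of Proposition \ref{fin-orbit} for the finite open $P_{k,n}$-orbit structure of $\mathcal{O}$.

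The substantive work is to verify conditions (iv) and (v) at a fixed semisimple non-nilpotent $x$, which via the open saturated slice embedding reduces to statements on $\mathcal{D}(\mathbf{L}_{k,n}^{Ad(x)}(F))$ exactly as in Lemma \ref{cruc}. The key tool is a linear analogue of Proposition \ref{structure}: the eigenspace decomposition $F^n=\oplus_j V_j$ of $x$ is available as before, but the relation $\theta_{k,n}(x)=-x$ replaces $\theta_{k,n}(x)=x^{-1}$, so $\epsilon_{k,n}$ now pairs eigenspaces with eigenvalues $\zeta_j$ and $-\zeta_j$ rather than $\zeta_j$ and $\zeta_j^{-1}$. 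The three-case analysis goes through verbatim, producing factors of type $(\mathbf{GL}_{l_i}(E_i),\mathbf{M}_{l_i,l_i}(E_i))$ from non-zero eigenvalue components and factors of type $(H_{m_i,l_i},\mathbf{L}_{m_i,l_i}(F))$ from the $0$-eigenspace component, with the admissible subgroups $P^x_{k,n}$ and the modular character formulas unchanged. Because $x\notin\mathcal{N}_{k,n}$, the zero-eigenspace has dimension at most $n-1$, so every $l_i$ in the second family satisfies $l_i<n$, matching the range of the hypothesis; moreover $m_i\leq k$ as in the symmetric case. Applying Bernstein's theorem to the first family, the hypothesis to the second, and combining via the product lemma \cite[3.1.5]{aiz-gur3}, secures condition (iv). Condition (v) follows from a scalar-matrix argument identical to Corollary \ref{first-cor}, which is insensitive to the switch from $X_{k,n}$ to $\mathbf{L}_{k,n}(F)$.

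The main obstacle is nothing conceptually new but rather the bookkeeping needed to establish the linear analogues of Propositions \ref{structure} and \ref{fin-orbit}, including verification of admissibility of $P_{k,n}^x$ and the relative modular character formula in the linear setting; these are straightforward translations, substituting the pairing $\{\zeta_j,\zeta_j^{-1}\}$ by $\{\zeta_j,-\zeta_j\}$ under $\epsilon_{k,n}$. Once they are in place, the argument is a transcription of the proof of Lemma \ref{cruc}, and the strict bound $l_i<n$ furnished by non-nilpotency is precisely what enables the induction.
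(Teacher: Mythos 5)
Your proof is correct, but it takes a genuinely different route from the paper at the key technical step. You propose to develop a \emph{linear analogue} of Proposition~\ref{structure} (and of Proposition~\ref{fin-orbit}) directly for semisimple $x\in\mathbf{L}_{k,n}(F)\setminus\mathcal{N}_{k,n}$, observing that the relation $\theta_{k,n}(x)=-x$ makes $\epsilon_{k,n}$ pair the $\zeta$- and $(-\zeta)$-eigenspaces, so that the only factor of type $(H_{m,l},\mathbf{L}_{m,l}(F))$ comes from the kernel of $x$, which has dimension $l<n$ precisely because $x$ is non-nilpotent (hence nonzero). That is a clean and correct way to bound the factors so the inductive hypothesis applies; it is even slightly cleaner than the symmetric case, since $\zeta=-\zeta$ has only the one solution $\zeta=0$, whereas $\zeta=\zeta^{-1}$ gives two.

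The paper avoids proving any linear analogue of Proposition~\ref{structure}. Instead it applies Luna's Slice Theorem a first time at the identity $I_n\in X_{k,n}$ to obtain an $H_{k,n}$-equivariant open embedding $\iota:U\to\mathbf{L}_{k,n}(F)$ with $0\in\iota(U)$, then rescales $\mathcal{O}$ by some $\lambda\in F^\times$ so that it lies in $\iota(U)$, and pulls it back to a closed orbit $\iota^{-1}(\mathcal{O})$ in $X_{k,n}$. Since $\iota$ is equivariant, the normal space data along $\mathcal{O}$ agree with those along $\iota^{-1}(\mathcal{O})$, so Proposition~\ref{structure} in its already-proved form applies. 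The bound $l<n$ is then obtained not through your eigenvalue analysis but through a coarse dimension count: $\iota(x)\neq 0$ forces $H^x_{k,n}\subsetneq H_{k,n}$, hence $\dim\mathbf{L}_{k,n}^{Ad(x)}(F)<\dim\mathbf{L}_{k,n}(F)$, which rules out $\mathbf{L}_{k,n}$ itself appearing as a factor (and $l=n$ forces $m=k$, so this is the only case to exclude).

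In short, you trade the paper's re-embedding trick for a self-contained structure analysis in the linear space. Both are valid; the paper's route is more economical in that it reuses Proposition~\ref{structure} verbatim, while yours is more direct and exposes the precise mechanism ($\ker x$ having dimension $<n$) that makes the induction close. If you pursue your version, the bookkeeping you flag -- establishing admissibility of $P_{k,n}^x$ and the modular character formula in the linear setting -- does need to be written out, since those are the parts of Proposition~\ref{structure} that feed into conditions (iv) and (v) of Proposition~\ref{abstract-lem}; but as you say, the verifications are verbatim translations of the symmetric case.
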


\begin{proof}
Let $\mathcal{O} \subset \mathbf{L}_{k,n}(F)\setminus \mathcal{N}_{k,n}$ be a closed $H_{k,n}$-orbit. As in the proof of Lemma \ref{cruc}, it is enough to find for each such $\mathcal{O}$ an open $H_{k,n}$-invariant neighborhood $\mathcal{U}$, such that $\mathcal{D}(\mathcal{U})^{P_{k,n}}=\mathcal{D}(\mathcal{U})^{H_{k,n}}$.
\par Consider the identity element $I_n=\rho_{k,n}(I_n)\in X_{k,n}$. The action of $H_{k,n}$ on its normal space is equivalent to that of conjugation on $\mathbf{L}_{k,n}(F)^{Ad(I_n)} = \mathbf{L}_{k,n}(F)$. By applying Luna's Slice Theorem on the trivial closed orbit of $I_n$ in $X_{k,n}$, we get an open $H_{k,n}$-invariant neighborhood $I_n\in U\subset X_{k,n}$ with an $H_{k,n}$-equivariant embedding $\iota:U\to \mathbf{L}_{k,n}(F)$, whose image is open and contains the zero vector.
\par Note, that $\lambda \mathcal{O}\subset \iota(U)$ for some $\lambda\in F^\times $, and $\lambda \mathcal{O}$ is also a closed orbit. Since the action on $\mathbf{L}_{k,n}(F)$ is linear, finding a suitable neighborhood $\mathcal{U}$ for it, would give $\lambda^{-1}\mathcal{U}$ as our desired neighborhood for $\mathcal{O}$. Thus, we can safely assume that $\mathcal{O}\subset\iota(U)$.
\par Another application of Luna's Slice Theorem, this time on the space $\mathbf{L}_{k,n}(F)\setminus \mathcal{N}_{k,n}$, would give a prescribed open neighborhood $\mathcal{U}$ of $\mathcal{O}$. Now, since $\iota$ is an equivariant embedding and since by Lemma \ref{closedisclosed} $\mathcal{O}$ is closed in $\iota(U)$ as well, the normal spaces of elements in $\mathcal{O}$ together with the actions of the stabilizers in $H_{k,n}$ are isomorphic to the normal spaces of elements in $\iota^{-1}(\mathcal{O})$. Thus, using the same arguments as the proof of Lemma \ref{cruc} it can be shown that the equality $$\star \quad \mathcal{D}\left(\mathbf{L}_{k,n}^{Ad(x)}(F)\right)^{P^x_{k,n}} = \mathcal{D}\left(\mathbf{L}_{k,n}^{Ad(x)}(F)\right)^{H^x_{k,n}}$$
for a certain $x\in U$ with $\iota(x)\in \mathcal{O}$, would imply $\mathcal{D}(\mathcal{U})^{P_{k,n}}=\mathcal{D}(\mathcal{U})^{H_{k,n}}$.
\par Indeed, since $\iota(x)\not=0$, we clearly have $H^x_{k,n} = H^{\iota(x)}_{k,n}\subsetneq H_{k,n}$. Hence, 
$$\dim \mathbf{L}_{k,n}^{Ad(x)}(F) = \dim N_{x}\iota^{-1}(\mathcal{O}) = \dim X_{k,n} - \,\dim H_{k,n}/H^x_{k,n}< \dim \mathbf{L}_{k,n}(F).$$ 
We see that the factor $\mathbf{L}_{k,n}(F)$ cannot appear in the decomposition of Proposition \ref{structure} for $\mathbf{L}_{k,n}^{Ad(x)}(F)$. Therefore, using the same technique as in the proof of Theorem \ref{main-reduction}, our assumptions are enough to prove the equality $\star$.

\end{proof}

\begin{cor}\label{nil-cor}
Suppose that the equality
$$\mathcal{D}\left(\mathbf{L}_{m,l}(F)\right)^{P_{m,l}} = \mathcal{D}\left(\mathbf{L}_{m,l}(F)\right)^{H_{m,l}}$$
holds, for all positive integers $m\leq k$ and $l< n$, and suppose further that $\mathcal{D}(\mathcal{N}_{k,n})^{P_{k,n}}= \mathcal{D}(\mathcal{N}_{k,n})^{H_{k,n}}$. Then, $\mathcal{D}\left(\mathbf{L}_{k,n}(F)\right)^{P_{k,n}} = \mathcal{D}\left(\mathbf{L}_{k,n}(F)\right)^{H_{k,n}}$ holds.
\end{cor}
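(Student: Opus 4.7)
The plan is to combine the two hypotheses through the short exact sequence of distribution spaces attached to the closed inclusion $\mathcal{N}_{k,n} \hookrightarrow \mathbf{L}_{k,n}(F)$ guaranteed by Lemma \ref{closedisclosed}. Writing $U := \mathbf{L}_{k,n}(F) \setminus \mathcal{N}_{k,n}$, this sequence
$$0 \longrightarrow \mathcal{D}(\mathcal{N}_{k,n}) \longrightarrow \mathcal{D}(\mathbf{L}_{k,n}(F)) \longrightarrow \mathcal{D}(U) \longrightarrow 0$$
is $H_{k,n}$-equivariant. Taking invariants under either $P_{k,n}$ or $H_{k,n}$ produces a left-exact sequence, and the two outer terms coincide between the two versions: the kernels by the assumption of the corollary, and the cokernels by Proposition \ref{redtonil}, whose hypotheses (equality for $m \leq k$, $l < n$) are exactly those given here.

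The next step is the standard diagram chase. Given $T \in \mathcal{D}(\mathbf{L}_{k,n}(F))^{P_{k,n}}$, the restriction $c := T|_U$ lies in $\mathcal{D}(U)^{P_{k,n}} = \mathcal{D}(U)^{H_{k,n}}$. Suppose one can produce an $H_{k,n}$-invariant distribution $T' \in \mathcal{D}(\mathbf{L}_{k,n}(F))^{H_{k,n}}$ with $T'|_U = c$. Then $T - T'$ is automatically $P_{k,n}$-invariant and supported on $\mathcal{N}_{k,n}$, hence lies in $\mathcal{D}(\mathcal{N}_{k,n})^{P_{k,n}} = \mathcal{D}(\mathcal{N}_{k,n})^{H_{k,n}}$ by the main assumption, and therefore $T = T' + (T-T') \in \mathcal{D}(\mathbf{L}_{k,n}(F))^{H_{k,n}}$, finishing the proof.

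The crux is thus the construction of the $H_{k,n}$-invariant extension $T'$ of $c$, equivalently the statement that the image of $\mathcal{D}(\mathbf{L}_{k,n}(F))^{H_{k,n}}$ in $\mathcal{D}(U)^{H_{k,n}}$ under restriction contains the image of its $P_{k,n}$-counterpart. I expect this to be the main technical obstacle, for two reasons: first, only semisimple nilpotent elements are closed $H_{k,n}$-orbits in $\mathcal{N}_{k,n}$, i.e.\ just the origin, and at $0$ the Luna slice returns the whole of $\mathbf{L}_{k,n}(F)$, so a purely local argument there does not reduce the problem; second, the map $T \mapsto T|_U$ does not come with an obvious equivariant section. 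My plan is to address it by localizing via Luna's Slice Theorem (Theorem \ref{luna}) around each closed $H_{k,n}$-orbit of $\mathbf{L}_{k,n}(F)$, using Proposition \ref{structure} to identify the normal slice with a product containing lower-dimensional $\mathbf{L}_{m,l}$ factors (which fall under the ambient assumption), and then patching the local $H_{k,n}$-invariant extensions across a covering, in the spirit of \cite[Lemma 3.2]{off}. Exploiting the scaling action of $F^\times$ on $\mathbf{L}_{k,n}(F)$---which commutes with $H_{k,n}$ and preserves both $\mathcal{N}_{k,n}$ and $U$---may provide the input needed to handle the origin.
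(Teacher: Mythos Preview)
Your diagram chase is set up correctly, and you have correctly isolated the crux: producing an $H_{k,n}$-invariant extension $T'\in\mathcal{D}(\mathbf{L}_{k,n}(F))^{H_{k,n}}$ of $c=T|_U$. However, the plan you sketch for this step does not go through. Patching via Luna slices and \cite[Lemma~3.2]{off} is a device for gluing invariant distributions over an open cover; here the difficulty is extension across the \emph{closed} set $\mathcal{N}_{k,n}$, and as you yourself observe, the only closed $H_{k,n}$-orbit inside $\mathcal{N}_{k,n}$ is $\{0\}$, whose Luna slice is the full space---so no reduction occurs. The $F^\times$-scaling commutes with everything in sight but does not by itself manufacture an invariant extension of an arbitrary $c$; homogeneity arguments control uniqueness rather than existence. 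Concretely: for $h\in H_{k,n}$ the difference $h\cdot T-T$ is supported on $\mathcal{N}_{k,n}$, but there is no reason for it to be $P_{k,n}$-invariant (since $P_{k,n}$ is not normal in $H_{k,n}$), so one cannot feed it directly into the hypothesis on $\mathcal{D}(\mathcal{N}_{k,n})$.

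The paper avoids this extension problem entirely by invoking Bernstein's \emph{localization principle} \cite[1.4]{bern} with respect to the $H_{k,n}$-invariant characteristic-polynomial map $t:\mathbf{L}_{k,n}(F)\to F^n$ of Lemma~\ref{closedisclosed}. That principle reduces the equality $\mathcal{D}(\mathbf{L}_{k,n}(F))^{P_{k,n}}=\mathcal{D}(\mathbf{L}_{k,n}(F))^{H_{k,n}}$ to the same equality on each fiber $t^{-1}(s)$. One fiber is $\mathcal{N}_{k,n}$, handled by hypothesis; every other fiber is closed in $U=\mathbf{L}_{k,n}(F)\setminus\mathcal{N}_{k,n}$, so the equality there follows from Proposition~\ref{redtonil}. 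This is the missing ingredient you need: rather than extend $c$ across $\mathcal{N}_{k,n}$, localize along $t$ so that the nilpotent fiber and the semisimple fibers decouple.
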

\begin{proof}
We use the localization principle from \cite[1.4]{bern} with the map $t$ from the proof of Lemma \ref{closedisclosed}. By that principle, it is enough to prove the equality of invariant distribution spaces only on the fibers of $t$. Each of fibers, except $t^{-1}(1,0,\ldots,0)=\mathcal{N}_{k,n}$, is closed in $L_{k,n}(F)\setminus\mathcal{N}_{k,n}$. Hence, the equality of distribution spaces on each of those can be deduced from the previous proposition. 
\end{proof}

\begin{thm}\label{final-form}
Let $k,n$ be positive integers such that $k <n-1$. Suppose that the equality
$$\mathcal{D}\left(\mathcal{N}_{m,l}(F)\right)^{P_{m,l}} = \mathcal{D}\left(\mathcal{N}_{m,l}(F)\right)^{H_{m,l}}$$
holds, for all positive integers $m\leq k$ and $l\leq n$. Then, we also have 
$$\mathcal{D}(X_{k,n})^{P_{k,n}}= \mathcal{D}(X_{k,n})^{H_{k,n}}$$
\end{thm}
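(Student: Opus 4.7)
The plan is to run a two-layer bootstrap that chains the reductions already established earlier in this section. The hypothesis of Theorem \ref{final-form} controls distributions on the nilpotent cones $\mathcal{N}_{m,l}$; Corollary \ref{nil-cor} promotes such nilpotent equalities to equalities on the ambient linear symmetric spaces $\mathbf{L}_{m,l}(F)$; and Theorem \ref{main-reduction} then converts the package of linear equalities into the desired statement on $X_{k,n}$. Accordingly, I would first run an induction producing all the linear equalities required to feed Theorem \ref{main-reduction}, and then apply that theorem once at the end.

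Concretely, the intermediate claim to establish is that, under the standing hypothesis of Theorem \ref{final-form}, one has
$$\mathcal{D}\left(\mathbf{L}_{m,l}(F)\right)^{P_{m,l}} = \mathcal{D}\left(\mathbf{L}_{m,l}(F)\right)^{H_{m,l}}$$
for all positive integers $m \leq k$ and $m \leq l \leq n$. The argument is an induction on $l$. The boundary case $l = m$ is immediate because $\mathbf{L}_{m,m}(F) = \{0\}$. For the inductive step at a pair $(m, l)$ with $m < l$, the inductive hypothesis supplies the linear equalities for every pair $(m', l')$ with $m' \leq m$ and $l' < l$, while the hypothesis of Theorem \ref{final-form} supplies the nilpotent equality at $(m, l)$ itself. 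Corollary \ref{nil-cor} combines these two inputs into the linear equality at $(m, l)$, which closes the induction.

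Once the intermediate claim is in hand, its conclusion is precisely the hypothesis of Theorem \ref{main-reduction}, and the pair $(k, n)$ satisfies $k < n - 1$ by the standing assumption of Theorem \ref{final-form}. A single invocation of Theorem \ref{main-reduction} therefore yields the desired $\mathcal{D}(X_{k,n})^{P_{k,n}} = \mathcal{D}(X_{k,n})^{H_{k,n}}$ and completes the proof.

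The argument is essentially an organizational exercise, and I do not anticipate any genuinely new difficulty. The one point that merits care is whether Corollary \ref{nil-cor} remains applicable at the extreme pairs of the induction, namely those with $m = l - 1$, where the constraint $k < n - 1$ inherited by Corollary \ref{nil-cor} from Proposition \ref{redtonil} would formally fail. In such cases one should either verify that the constraint is inessential to the slice-and-dimension argument underlying Proposition \ref{redtonil}, or else dispatch these low-dimensional pairs by a direct orbit analysis of the action of $H_{m,m+1}$ on $\mathbf{L}_{m,m+1}(F)$, which is amenable to elementary treatment.
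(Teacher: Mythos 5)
Your proof is correct and follows essentially the same route as the paper's one-line proof (induction on $(m,l)$ feeding Corollary~\ref{nil-cor} into the hypotheses of Theorem~\ref{main-reduction}). You are right to flag the boundary pairs $m = l - 1$, which the paper passes over in silence: Corollary~\ref{nil-cor} inherits the constraint $k < n-1$ from Proposition~\ref{redtonil}, so it does not formally cover $(m, m+1)$. These cases are in fact trivial, and no orbit analysis is needed: $H_{m,m+1} = G_m \times G_1$ contains the center $Z$ of $G_{m+1}$, which acts trivially by conjugation on $\mathbf{L}_{m,m+1}(F) \subset \mathbf{M}_{m+1,m+1}(F)$, and one checks $P_{m,m+1} \cdot Z = H_{m,m+1}$ (indeed $P_{m,m+1} = G_m \times \{1\}$ and every $(h,\alpha) \in G_m \times F^\times$ equals $(\alpha^{-1}h, 1)\cdot(\alpha I_m, \alpha)$). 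Hence $P_{m,m+1}$-invariance of a distribution on $\mathbf{L}_{m,m+1}(F)$ automatically gives $H_{m,m+1}$-invariance, and similarly $\mathbf{L}_{m,m}(F)=\{0\}$. With this observation your induction closes cleanly.
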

\begin{proof}
By induction on $m$ and $l$, it is easy to use Corollary \ref{nil-cor} at each step in such manner that the assumptions of Theorem \ref{main-reduction} are satisfied.
\end{proof}

\section{The case $k=1$.}
In this section we fix $n\geq3$. We would like to characterize the space of $P_{1,n}$-conjugation-invariant distributions on the nilpotent cone $\mathcal{N}_{1,n}$. The eventual result will be that it is a $2$-dimensional space spanned by the obvious point distribution centered on the zero vector, and by a second distribution $\nu$ which we will need to construct, both of which are also $H_{1,n}$-invariant. To show the existence of such $\nu$ we need to be able to prolong an $H_{1,n}$-invariant distribution on an open orbit onto its closure in such way that it remains invariant. This task of singularity resolution is carried out using the "dense Frobenius descent" (Proposition \ref{dense-frob}) technique, which reduces the problem to that of distribution prolongation from $F^\times$ to $F$.
\par The space $\mathbf{L}_{1,n}(F)$ is the set of matrices given by
$$\left\{\left(\begin{array}{ll} 0 & \overline{v} \\ \overline{w} & 0 \end{array}\right): \overline{v}\in \mathbf{M}_{1,n-1} (F),\;\overline{w}\in \mathbf{M}_{n-1,1}(F)\right\}.$$
Thus we can parametrize this space by pairs $(\overline{v},\overline{w})$, where the first is a row a vector while the second is a column vector. It will sometimes be convenient to write the row vector as $\overline{v} = (\overline{v}'\;a)$ and the column vector as $\overline{w} = \left(\begin{array}{ll} \overline{w}' \\ b\end{array}\right)$, where $a,b\in F$ and $\overline{v}',\overline{w}'$ are of length $n-2$. In this notation we get a parametrization of $\mathbf{L}_{1,n}(F)$ by quadruples $(\overline{v}',a, \overline{w}',b)$.
\par The conjugation action of the group $H_{1,n}$, naturally viewed as $F^\times\times G_{n-1}$, is given in pairs notation by the formula $(\alpha, A)\cdot (\overline{v},\overline{w}) = (\alpha \overline{v}A^{-1}, \alpha^{-1}A\overline{w})$. Also, it can be seen that in these terms the nilpotent cone $\mathcal{N}_{1,n}$ consists of matrices for which $\overline{v}\cdot \overline{w}=0$. We denote by $U\subset \mathcal{N}_{1,n}$ the open $H_{1,n}$-orbit defined by the condition $\overline{v},\overline{w}\neq 0$.

\begin{lem}\label{thereis-dist-gen}
On $\mathcal{N}_{1,n}$ there is an $H_{1,n}$-invariant distribution, whose support is the whole space. 
\end{lem}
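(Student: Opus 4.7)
The plan is to apply the dense Frobenius descent (Proposition~\ref{dense-frob}), as foreshadowed in this section's introduction, in order to reduce the construction of such an invariant distribution to the problem of prolonging a semi-invariant distribution from $F^\times$ to $F$.

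Concretely, set $\mathcal{X} = \mathcal{N}_{1,n}$, $G = H_{1,n}$, and take $\mathcal{K} = \mathbb{P}^{n-2}(F)$ as the compact homogeneous space, with the transitive $G$-action factoring through the action of $G_{n-1}$ on row vectors. Let $\mathcal{Y} = \{(\overline{v}, \overline{w}) \in \mathcal{N}_{1,n} : \overline{v} \neq 0\}$ and $i(\overline{v}, \overline{w}) = [\overline{v}]$; then $\mathcal{Y}$ is $G$-invariant, $i$ is $G$-equivariant and surjective, and $\overline{\mathcal{Y}} = \mathcal{N}_{1,n}$ follows since $n \geq 3$ lets us approximate any $(0, \overline{w})$ by $(\overline{v}, \overline{w})$ with small $\overline{v}$ orthogonal to $\overline{w}$. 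For $y = [e_1]$ I identify the closure $\overline{\mathcal{Y}_y} = \{(\overline{v}, \overline{w}) \in \mathcal{N}_{1,n} : \overline{v} \in F e_1,\, w_1 = 0\}$ with $F \times F^{n-2}$ via coordinates $(\alpha, w')$, where $\overline{v} = \alpha e_1$ and $\overline{w}$ has first entry $0$ and trailing entries $w'$.

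The stabilizer is $B = F^\times \times P'$, where $P' = \{A \in G_{n-1} : e_1 A \in F^\times e_1\}$ is the parabolic subgroup of $G_{n-1}$ fixing the line through $e_1$. Writing a typical element of $P'$ as $\begin{pmatrix}\beta^{-1} & 0 \\ v & C\end{pmatrix}$, a short computation of the adjoint action of its Levi on the unipotent radical yields the modular character $\Delta_B(\alpha_0, (\beta^{-1}, v, C)) = |\beta|^{-(n-2)} |\det C|^{-1}$, and hence $\Delta_{G/B}^{-1} = \Delta_B$ since $G$ is unimodular. The candidate distribution is $T = |\alpha|^{n-3}\,d\alpha\,dw'$ on $\overline{\mathcal{Y}_y}$, extended by zero to $\mathcal{X}$; $n \geq 3$ guarantees local integrability of $|\alpha|^{n-3}$ at $\alpha = 0$, so $T$ is well-defined, and its support is exactly $\overline{\mathcal{Y}_y}$. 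The $B$-action on the fiber coordinates reads $(\alpha, w') \mapsto (\alpha_0 \alpha \beta,\, \alpha_0^{-1} C w')$, and a direct change of variables yields $b \cdot T = |\beta|^{-(n-2)} |\det C|^{-1} T = \Delta_B(b)\, T$ (the $|\alpha_0|$-factors cancel). Thus $T \in \mathcal{D}(\mathcal{X})^{B,\,\Delta_{G/B}^{-1}}$, and Proposition~\ref{dense-frob} then produces the required $G$-invariant distribution $T'$ on $\mathcal{N}_{1,n}$ with support $\overline{\mathcal{Y}} = \mathcal{N}_{1,n}$.

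The main technical hurdle is the character-matching step: the exponent $s$ in the ansatz $|\alpha|^s\,d\alpha\,dw'$ must simultaneously absorb the $|\beta|^{-(n-2)}$-twist from the $P'$-action on $\alpha$ and the $|\alpha_0|$-factors arising from the interaction of both coordinates, and it happens that the single value $s = n - 3$ resolves both constraints at once. That this integer is non-negative — and thus $|\alpha|^s$ locally integrable at zero — is precisely the "prolongation from $F^\times$ to $F$" that makes the descent applicable, and is where the hypothesis $n \geq 3$ enters essentially.
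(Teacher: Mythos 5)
Your proof is correct and follows essentially the same strategy as the paper: you apply the dense Frobenius descent (Proposition~\ref{dense-frob}) with the same compact base $\mathbb{P}^{n-2}(F)$, the same kind of parabolic stabilizer, and the same reduction to a one-variable prolongation problem; the only differences are cosmetic (you project to $[\overline{v}]$ where the paper projects to $[\overline{w}]$, and you write down the explicit density $|\alpha|^{n-3}\,d\alpha$ whereas the paper invokes the general Tate/Weil prolongation result for $\mathcal{D}(F)^{F^\times,\,|\cdot|^{2-n}}$, which specializes to the same locally integrable measure).
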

\begin{proof}
Consider the compact projective space $\mathbb{P}^{n-2}(F)$ together with the $H_{1,n}$-action given by $(\alpha, A)\cdot [p] = [Ap]$ for any line $[p]$ represented by $p\in F^{n-1}$ and $(\alpha,A)\in F^\times\times G_{n-1}$. We define an $H_{1,n}$-equivariant map $i:U\to \mathbb{P}^1(F)$ by $i((\overline{v},\overline{w}))=[\overline{w}]$.
\par Note, that the closure of $i^{-1}\left(\left[\left( 0  \cdots 0 \;1\right)^t\right]\right)$ is 
$$Y:= \left\{ (\overline{v}',0, \overline{0}, b)\,:\, \overline{v}'\in \mathbf{M}_{1,n-2}(F),\, b\in F \right\}\cong F^{n-2}\times F.$$ 
Note also, that 
$$S:= Stab\left(H_{1,n}, \left[\left( 0  \cdots 0 \;1\right)^t\right]\in\mathbb{P}^{n-2}(F)\right) =$$ $$= F^\times\times \left\{ \left(\begin{array}{ll} B & 0\\ \overline{\gamma} &d \end{array} \right)\in G_{n-1}(F) \,:\, B\in G_{n-2}(F),\,d\in F^\times\right\}.$$
Since $U$ is dense in $\mathcal{N}_{1,n}$, by Proposition \ref{dense-frob}, it is enough to exhibit a non-zero $S,\Delta_{H_{1,n}/S}^{-1}$-invariant distribution supported on $Y$. The action of $S$ on $Y$ is given by
$$\left(\alpha, \left(\begin{array}{ll} B & 0\\ \overline{\gamma} &d \end{array} \right)\right)\cdot (\overline{v}',0, \overline{0}, b)= (\alpha \overline{v}'B^{-1},0, \overline{0}, \alpha^{-1}d b).$$
Also, we have
$$\Delta_{H_{1,n}(F)/S}^{-1}\left(\alpha, \left(\begin{array}{ll} B & 0\\ \overline{\gamma} &d \end{array} \right)\right) = \Delta_{S}\left(\alpha,\left(\begin{array}{ll} B & 0\\ \overline{\gamma} &d \end{array} \right)\right) = |\det(B)d^{2-n}|.$$
The last equality is easily derived after noting that $S$ is a direct sum of the unimodular $F^\times$, and of a parabolic subgroup of $G_{n-1}$. 
\par Let $\chi(x) = |x|^{2-n}$ be a character on $F^\times$, and $0\neq\mu\in \mathcal{D}(F^{\times})^{F^\times, \chi}$ the corresponding distribution. It is known that such $\mu$ can be prolonged into a non-zero distribution $\widetilde{\mu}\in \mathcal{D}(F)^{F^\times, \chi}$ supported on all of $F$ (see \cite[Chapter 2, 2.3]{gel-gp}, or \cite{weil66}). After identifying $Y$ with $F^{n-2}\times F$, we can put the distribution $m\otimes \widetilde{\mu}$ on it, where $m$ is the Haar measure on the vector space $F^{n-2}$. It is easy to see that the $S$-action transforms this distribution according to the formula for $\Delta_{H_{1,n}(F)/S}^{-1}$. Thus, we have our desired distribution.
\end{proof}

Let $\nu\in\mathcal{D}(\mathcal{N}_{1,n})^{H_{1,n}}$ be a distribution as provided by the lemma. Denote also by $\delta_0\in\mathcal{D}(\mathcal{N}_{1,n})^{H_{1,n}}$ the distribution given by $\delta_0(f) = f(\overline{0},\overline{0})$, for all $f\in\mathcal{S}(\mathcal{N}_{1,n})$.

\begin{thm}\label{k1-thm}
The space of $P_{1,n}$-invariant distributions on $\mathcal{N}_{1,n}$ is spanned by $\delta_0$ and $\nu$. In particular,
$$\mathcal{D}(\mathcal{N}_{1,n})^{P_{1,n}}=\mathcal{D}(\mathcal{N}_{1,n})^{H_{1,n}}$$

\end{thm}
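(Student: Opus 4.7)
The strategy relies on the $H_{1,n}$-orbit stratification $\mathcal{N}_{1,n}=\{0\}\sqcup O_1\sqcup O_2\sqcup U$, where $O_1=\{\bar v=0,\bar w\neq 0\}$, $O_2=\{\bar v\neq 0,\bar w=0\}$, and $U$ is the open dense orbit. Given $T\in\mathcal{D}(\mathcal{N}_{1,n})^{P_{1,n}}$, I first intend to show that $T|_U$ is a scalar multiple of $\nu|_U$, and then that the remainder $T-c\nu$, supported on $Z=\mathcal{N}_{1,n}\setminus U=\{\bar v=0\}\cup\{\bar w=0\}$, must be a scalar multiple of $\delta_0$. Together this yields $\dim\mathcal{D}(\mathcal{N}_{1,n})^{P_{1,n}}\leq 2$, matching the lower bound provided by the $H_{1,n}$-invariants $\delta_0$ and $\nu$.

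For the first reduction, the aim is to prove $\mathcal{D}(U)^{P_{1,n}}=\mathcal{D}(U)^{H_{1,n}}$, which is one-dimensional since $U$ is a single $H_{1,n}$-orbit with invariant measure $\nu|_U$. I would invoke Proposition \ref{abstract-lem} with $G=H_{1,n}$, $P=P_{1,n}$, $\mathcal{X}=U$, and auxiliary $H_{1,n}$-transitive target $\mathcal{O}=\mathbb{P}^{n-2}(F)$ via the equivariant map $i:(\bar v,\bar w)\mapsto[\bar w]$. The group $\mathbf{P}_{n-1}$ has exactly two orbits on $\mathbb{P}^{n-2}$ (the hyperplane $p_{n-1}=0$ and its complement), so hypothesis (iii) holds. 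Conditions (iv) and (v) reduce, by Frobenius descent on the fibers $i^{-1}(x)\cong(F^{n-2}\setminus\{0\})\times F^\times$, to comparing spaces of semi-invariants; a direct computation shows the relevant point-stabilizers act through a torus quotient and the $|\det|$-twists cancel out as needed.

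For the second reduction I want $\mathcal{D}(Z)^{P_{1,n}}=\mathbb{C}\delta_0$. Using the left-exact sequence
\[
0\to\mathbb{C}\delta_0\to\mathcal{D}(Z)^{P_{1,n}}\to\mathcal{D}(O_1)^{P_{1,n}}\oplus\mathcal{D}(O_2)^{P_{1,n}},
\]
it suffices to show that the right-hand map is zero. Further decomposing each $O_i$ into an open generic $P_{1,n}$-orbit $O_i^{op}$ and a closed sub-orbit $O_i^{cl}\cong F^{n-2}\setminus\{0\}$, Frobenius descent produces a non-trivial $|\det|$-character obstruction on $O_i^{op}$ for every $n\geq 3$, and analogous obstructions on $O_i^{cl}$ whenever $n\geq 4$, forcing all contributions to vanish in that range.

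The main obstacle is the case $n=3$, in which each $O_i^{cl}\cong F^\times$ does carry a nonzero $P_{1,3}$-invariant distribution proportional to $dw/|w|$. To handle this, I would argue that any hypothetical lift $T\in\mathcal{D}(Z)^{P_{1,3}}$ of such a nonzero pair $(c_1,c_2)$ has vanishing restriction to the open sub-orbits $O_i^{op}$, and is therefore supported on the union of two 1-dimensional lines $L_1\cup L_2$ (the closures of the closed sub-orbits, meeting only at the origin). It remains to show $\mathcal{D}(L_1\cup L_2)^{P_{1,3}}=\mathbb{C}\delta_0$. This boils down to the $p$-adic Tate-type assertion that $dw/|w|$ does not extend $F^\times$-equivariantly from $F^\times$ to $F$: in the Laurent expansion $|w|^s\,dw=\tfrac{A}{s+1}+B+O(s+1)$ the residue $A$ is a multiple of $\delta_0$, while the finite part $B$ picks up an anomaly $\log|\lambda|\cdot A$ under scaling by $\lambda$ that cannot be absorbed into $A$. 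This rules out the lift and completes the proof.
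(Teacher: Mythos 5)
Your strategy is reasonable in outline, but it contains two concrete errors, one in each of the two reduction steps.

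\textbf{First reduction.} You propose to apply Proposition~\ref{abstract-lem} with $\mathcal{O} = \mathbb{P}^{n-2}(F)$ to prove $\mathcal{D}(U)^{P_{1,n}} = \mathcal{D}(U)^{H_{1,n}}$. This cannot work as stated: condition (ii) of Proposition~\ref{abstract-lem} requires a non-zero $H_{1,n}$-invariant distribution on $\mathcal{O}$, but there is none. The $H_{1,n}$-stabilizer of a line is $F^\times$ times a maximal parabolic of $G_{n-1}$, which is not unimodular, so $\Delta_{H_{1,n}/\mathrm{Stab}}\neq 1$ and $\mathbb{P}^{n-2}(F)$ carries no invariant distribution. (Compactness of $\mathbb{P}^{n-2}$ does not rescue this; it only guarantees a quasi-invariant measure.) The paper avoids the issue entirely by \emph{not} trying to prove $\mathcal{D}(U)^{P_{1,n}}=\mathcal{D}(U)^{H_{1,n}}$ first; instead it subtracts $c\nu$ only on the open $P_{1,n}$-orbit $W=\{b\neq 0\}\subsetneq U$, which is legitimate since $W$ is a single $P_{1,n}$-orbit and $\nu|_W\neq 0$, and then carries the remainder through the boundary strata.

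\textbf{Second reduction.} You assert that both $O_i^{cl}\cong F^{n-2}\setminus\{0\}$ and that for $n\geq 4$ a $|\det|$-character obstruction kills invariant distributions there. This is correct for $O_1^{cl}=\{\bar v=0,\,b=0,\,\bar w'\neq 0\}$, but false for $O_2^{cl}$. The closed $P_{1,n}$-orbit inside $O_2=\{\bar v\neq 0,\,\bar w=0\}$ is $O_2^{cl}=\{\bar v'=0,\,a\neq 0,\,\bar w=0\}\cong F^\times$, parametrized by $a$ and with $P_{1,n}$ acting through $a\mapsto\alpha a$. Its stabilizer is $\{1\}\times P_{n-1}$, whose modular character agrees with that of $P_{1,n}$, so $\Delta_{P_{1,n}/\mathrm{Stab}}=1$ and $O_2^{cl}$ carries a nonzero $P_{1,n}$-invariant distribution ($\cong da/|a|$) for \emph{every} $n\geq 3$. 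Hence the `` $n\geq 4$ '' dichotomy is wrong and the non-extendibility argument you reserve for $n=3$ is actually needed for all $n$, applied to $O_2^{cl}$ (and, separately, to $O_1^{cl}$ only when $n=3$). The paper's proof sidesteps this delicate extension question entirely: after killing $W$ and $O_1^{op}$, the support lies in $Y=\{b=0\}$, and the one-parameter subgroup $\left(\alpha,\mathrm{diag}(\alpha I_{n-2},1)\right)$, which fixes $(\bar v',\bar w')$ and scales $a$ by $\alpha$, forces the $a$-marginal to be supported at $0$ by the standard fact that $F^\times$-invariant distributions on $F$ concentrate at the origin. This eliminates $O_2^{cl}$ (and the whole stratum $\{a\neq 0\}$) in one stroke, without ever having to confront the extendibility of $da/|a|$.

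In short, the paper's orbit-by-orbit descent from $W$ through $Y$ is designed precisely so that the non-trivial invariant distribution living on $O_2^{cl}$ never has a chance to survive, whereas your stratification by $H_{1,n}$-orbits $U,O_1,O_2,\{0\}$ exposes it and forces you to handle a genuine extension problem that you have only treated for $n=3$.
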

\begin{proof}
Let $T\in \mathcal{D}(\mathcal{N}_{1,n})^{P_{1,n}}$ be given.
\par Note that $W=\left\{(\overline{v}',a, \overline{w}',b)\in U\;:\; b\neq 0\right\}$ is an open $P_{1,n}$-orbit in $\mathcal{N}_{1,n}$. Since an orbit can have at most one invariant distribution up to a constant and since $\nu|_W\neq 0$, there must be a constant $c$ for which $T-c\nu|_{W} = 0$. Yet, $T-c\nu$ is still $P_{1,n}$-invariant, which means we can simply assume that $T$ has its support inside $\mathcal{N}_{1,n}\setminus W$.
\par In this set, $O_1= \{(\overline{0},0, \overline{w}',b)\,:\, b\neq0\}$ is an open $P_{1,n}$-orbit lacking a non-zero invariant distribution. Indeed, for $s_1= (\overline{0},0, \overline{0},1)\in O_1$, its stabilizer inside $P_{1,n}$ is the unimodular group $$\{1\}\times\left\{\left(\begin{array}{ll} A \\ &1\end{array}\right)\;:\;A\in G_{n-2}\right\}.$$ It can be seen that $\Delta_{P_{1,n}/Stab(P_{1,n},s_1)}\neq 1$. Thus, $supp\, T\subset Y=\left\{(\overline{v}',a, \overline{w}',0)\right\}$.\\
Note, that we naturally have $Y\cong \mathcal{N}_{1,n-1}\times F$ as $P_{1,n}$-spaces, with the action on the right component given by $(\alpha,A)\cdot a = \alpha a$. Under this decomposition we write $T = T_1\otimes T_2$. Since $\left(\alpha,\left(\begin{array}{ll}\alpha I_{n-2}\\ &1 \end{array}\right)\right)\in P_{1,n}$ acts trivially on the left component of the decomposition, $T_2$ is actually an $F^\times$-invariant distribution on $F$, hence, by a well-known fact $T_2$ is supported on $0$ (see, for example, \cite[0.7]{bern}). In other words, $supp\, T \subset \left\{(\overline{v}',0, \overline{w}',0)\right\}$.
\par Now, there are only two $P_{1,n}$-orbits left that are fully contained inside the above set: The zero orbit, and $O_2 = \left\{(\overline{0}',0, \overline{w}',0)\,:\, \overline{w}\neq 0\right\}$. Thus, $T$ can be viewed as a $G_{n-2}$-invariant distribution on the space $F^{n-2}\cong O_2\cup \{0\}$, which means it must be supported on the zero vector, i.e.~a multiple of $\delta_0$. 
\end{proof}

A combination of the above with Theorem \ref{final-form} immediately gives a result about distributions on a non-linear space.
\begin{cor}\label{final-cor}
The equality
$$\mathcal{D}(X_{1,n})^{P_{1,n}}= \mathcal{D}(X_{1,n})^{H_{1,n}}$$
holds.
\end{cor}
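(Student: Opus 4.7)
The plan is to deduce this as an immediate specialization of Theorem \ref{final-form} at $k=1$ (with $n\geq 3$, as fixed throughout the section). Unwinding that theorem's hypothesis, one needs to verify
\[
\mathcal{D}(\mathcal{N}_{1,l}(F))^{P_{1,l}} = \mathcal{D}(\mathcal{N}_{1,l}(F))^{H_{1,l}}
\]
for every $1 \leq l \leq n$. For $l \geq 3$ this is precisely the content of Theorem \ref{k1-thm} (applied with its ambient dimension $n$ replaced by $l$), so all remaining work is confined to the two degenerate boundary cases $l=1,2$.

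The case $l=1$ is tautological: the off-diagonal blocks in the definition of $\mathcal{N}_{m,l}$ have zero dimension, so $\mathcal{N}_{1,1}=\{0\}$ and both distribution spaces reduce to $\mathbb{C}\delta_0$.

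For $l=2$, the cone $\mathcal{N}_{1,2}$ is the nilpotent quadric $\{bc=0\}$ in the pair parametrization, on which $H_{1,2}=F^\times\times F^\times$ acts by the rescaling $(b,c)\mapsto(\alpha\beta^{-1}b,\alpha^{-1}\beta c)$ while $P_{1,2}=\{(\alpha,1):\alpha\in F^\times\}$ acts by the induced one-parameter restriction. I would carry out a direct orbit-by-orbit inspection: both groups produce the same three orbits (the origin and the two punctured coordinate axes), and on each punctured axis Frobenius descent reduces the invariants to a one-dimensional space (the relevant stabilizers and modular characters being unimodular or trivial). The equality of the two spaces then follows on each stratum.

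Feeding these boundary verifications together with Theorem \ref{k1-thm} into Theorem \ref{final-form} immediately yields the corollary. The genuine obstacle of the section --- the construction and classification of $H_{1,n}$-invariants on $\mathcal{N}_{1,n}$ via the dense Frobenius descent --- has already been surmounted in the proof of Theorem \ref{k1-thm}, so nothing beyond routine bookkeeping remains.
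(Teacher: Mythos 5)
Your route matches the paper's: specialize Theorem~\ref{final-form} at $k=1$ and feed in Theorem~\ref{k1-thm} for the nilpotent hypotheses. You are actually \emph{more} careful than the paper's one-line proof, in that you notice Theorem~\ref{k1-thm} is stated only under the standing assumption $n\geq 3$, so the hypotheses of Theorem~\ref{final-form} at $l=1,2$ require a separate check. This is a genuine point worth flagging, and your $l=1$ verification is correct.

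However, your $l=2$ argument has a real gap. Inferring
$\mathcal{D}(\mathcal{N}_{1,2})^{P_{1,2}}=\mathcal{D}(\mathcal{N}_{1,2})^{H_{1,2}}$
from ``both groups have the same orbits, and on each orbit the invariant distributions agree'' is not a valid deduction. Equality of stratum-wise invariants never by itself yields equality of invariants on the whole space: the obstruction is precisely whether a given invariant distribution on an open stratum extends invariantly to its closure, and this extension problem can have different answers for $P$ and for $H$. That prolongation issue is exactly what makes the whole of Section~4 nontrivial (and is the reason the dense Frobenius descent is introduced). So the step ``the equality of the two spaces then follows on each stratum'' does not establish the required global equality.

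For $l=2$ the equality is nonetheless true, and for a cleaner reason which you should substitute: $H_{1,2}=F^\times\times F^\times$ acts on $\mathbf{L}_{1,2}(F)$ (and on $\mathcal{N}_{1,2}$) through its quotient by the diagonal center, which is a single copy of $F^\times$ acting by $(b,c)\mapsto(\gamma b,\gamma^{-1}c)$. The subgroup $P_{1,2}=\{(\alpha,1)\}$ maps \emph{onto} this quotient. Therefore $P_{1,2}$ and $H_{1,2}$ have identical images in the group of transformations of $\mathcal{N}_{1,2}$, and the two spaces of invariant distributions coincide tautologically, with no orbit analysis or prolongation argument needed. With that replacement, your proposal is complete and correct.
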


\section{Application to Representation Theory}
Our focus will now turn to smooth (admissible) complex representations of the group $G_n$. We will say that an irreducible such representation $(\pi,V)$ is \textit{$H_{k,n}$-distinguished}, if there is a non-zero linear functional on $V$, which is invariant under the dual $H_{k,n}$-action. By Frobenius reciprocity, it easily follows that $\pi$ is $H_{k,n}$-distinguished, if and only if, it can be embedded as a $G_n$-sub-representation of the space of locally constant functions on $G_n/H_{k,n}\cong X_{k,n}$ (with the $G_n$-action given by shifting of functions). Therefore, it is much expected that results on the space of distributions on $X_{k,n}$ would have implications on $H_{k,n}$-distinguished representations of $G_n$.
\par We are interested in the space of $P_{k,n}$-invariant functionals on smooth irreducible representations of $G_n$. More precisely, given an $H_{k,n}$-distinguished such $(\pi,V)$, we would like to claim that $(V^\ast)^{P_{k,n}} = (V^\ast)^{H_{k,n}}$. 
\par Note, that since the center $Z$ of $G_n$ is contained in $H_{k,n}$, it must act trivially on all $H_{k,n}$-distinguished representations. In case $k=n-1$, we have $H_{k,n}= P_{k,n}Z$, which trivially implies the equality that we seek. The cases $k=0,n$ are also clearly of no interest. Thus, we deal with the condition $1\leq k\leq n-2$.
\par Let us recount a rather standard argument for a deduction of representation-theoretic statements from the results of previous sections. 
\par Let $(\pi,V)$ be a smooth irreducible $H_{k,n}$-distinguished representation of $G_n$. Recall the notion of the smooth contragredient representation $(\widetilde{\pi},\widetilde{V})$. In fact, $\widetilde{\pi}$ can be equivalently realized on the space $V$ with the action $\widetilde{\pi}(g) = \pi(^tg^{-1})$ ($^t$ marks matrix transposition). Since $H_{k,n}=H_{k,n}^t$, the above realization shows that $ (\widetilde{\pi},\widetilde{V})$ is $H_{k,n}$-distinguished as well. Let us fix $0\neq \lambda \in 
\left(\left(\widetilde{V}\right)^\ast\right)^{H_{k,n}}$.
\par A choice of Haar measure $dm$ on $G_n$ defines an action of functions in $\mathcal{S}(G_n)$ on $V$ by $f\cdot v= \int_{G} f(g)\pi(g)v\,dm(g)$, hence, induces a surjective map $A_\pi:\mathcal{S}(G_n)\to End(V)\cong V\otimes \widetilde{V}$. Note, that $G_n\times G_n$ has a left action on $G_n$  with $(g_1,g_2)\cdot g = g_1gg_2^{-1}$, which induces an action on $\mathcal{S}(G_n)$. The mapping $A_\pi$ intertwines that action with the action of $G_n\times G_n$ on $V\otimes \widetilde{V}$.  Dualizing, we have an equivariant embedding $A_\pi^\ast$ of $V^\ast\otimes (\widetilde{V})^\ast$ into $\mathcal{D}(G_n)$. The image of the embedding consists of the so-called \textit{Bessel} distributions of $\pi$. In particular,
$$\iota: V^\ast \to \mathcal{D}(G_n)^{ 1 \times H_{k,n} }, \qquad \iota(\nu) = A^\ast_{\pi}(\nu\otimes \lambda)$$ 
defines an embedding which intertwines the $G_n$-action (the one induced from left multiplication on $G_n$ this time).
\par Finally, it is straightforward to show that $\mathcal{D}(G_n)^{ 1 \times H_{k,n} }$ and $\mathcal{D}(G_n/H_{k,n})$ are isomorphic as $G_n$-spaces (see \cite[3.1]{off}). Going through the identification $G_n/H_{k,n}\cong X_{k,n}$, we see that $V^\ast$ is embedded into $\mathcal{D}(X_{k,n})$ as a $G_n$-space.
\begin{cor}
The equality $\mathcal{D}(X_{k,n})^{P_{k,n}}= \mathcal{D}(X_{k,n})^{H_{k,n}}$ would imply $(V^\ast)^{P_{k,n}} = (V^\ast)^{H_{k,n}}$ for every $H_{k,n}$-distinguished irreducible smooth representation of $G_n$.
\end{cor}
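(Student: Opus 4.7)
The plan is to exploit the $G_n$-equivariant embedding $\iota\colon V^\ast \hookrightarrow \mathcal{D}(X_{k,n})$ that was just constructed in the preceding discussion, and reduce the representation-theoretic statement to the assumed distributional equality in essentially one move. First I would restrict attention to the nontrivial range $1 \le k \le n-2$, observing that the cases $k\in\{0,n\}$ are vacuous and that when $k=n-1$ one has $H_{k,n}=P_{k,n}Z$, so $Z$ acting trivially (as $Z\subset H_{k,n}$ and $\pi$ is $H_{k,n}$-distinguished, hence has trivial central character) already forces $P_{k,n}$-invariance to coincide with $H_{k,n}$-invariance on $V^\ast$.

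In the main range, let $\nu\in (V^\ast)^{P_{k,n}}$ be given. I would first note that the map $\iota$ intertwines the $G_n$-action on $V^\ast$ (through $\pi^\ast$) with the $G_n$-action on $\mathcal{D}(G_n)^{1\times H_{k,n}}$ induced by left multiplication; passing through the isomorphism $\mathcal{D}(G_n)^{1\times H_{k,n}}\cong \mathcal{D}(G_n/H_{k,n})\cong \mathcal{D}(X_{k,n})$ (which is itself $G_n$-equivariant for the left translation action), we obtain an equivariant embedding still denoted $\iota$. Then for every $p\in P_{k,n}$ one has $p\cdot \iota(\nu) = \iota(p\cdot\nu) = \iota(\nu)$, so $\iota(\nu)\in \mathcal{D}(X_{k,n})^{P_{k,n}}$. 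Applying the assumed equality $\mathcal{D}(X_{k,n})^{P_{k,n}} = \mathcal{D}(X_{k,n})^{H_{k,n}}$ yields $\iota(\nu)\in \mathcal{D}(X_{k,n})^{H_{k,n}}$.

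To conclude, I would invoke injectivity of $\iota$: for every $h\in H_{k,n}$, equivariance gives $\iota(h\cdot \nu)=h\cdot \iota(\nu) = \iota(\nu)$, so $h\cdot \nu = \nu$, i.e. $\nu\in (V^\ast)^{H_{k,n}}$. The reverse inclusion $(V^\ast)^{H_{k,n}}\subseteq (V^\ast)^{P_{k,n}}$ is trivial, so the claimed equality follows.

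The only points that require care (rather than genuine difficulty) are the two structural facts silently used above. First, injectivity of $\iota$: since $\lambda\neq 0$ the map $\nu\mapsto \nu\otimes \lambda$ is injective on $V^\ast$, and $A_\pi^\ast$ is injective as the transpose of the surjection $A_\pi\colon \mathcal{S}(G_n)\twoheadrightarrow V\otimes \widetilde V$, so the composition $\iota$ is injective. Second, the $G_n$-equivariance of $\mathcal{D}(G_n)^{1\times H_{k,n}}\cong \mathcal{D}(X_{k,n})$, which is a standard consequence of Frobenius reciprocity on distributions (as in \cite[3.1]{off}) and requires only that $\Delta_{G_n}|_{H_{k,n}} = \Delta_{H_{k,n}}$, which holds since both groups are unimodular. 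I would expect no genuine obstacle here: the preceding sections have already done all the work, and this corollary is the clean packaging step transferring the geometric equality into a statement about invariant linear forms on $\pi$.
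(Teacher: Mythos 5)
Your proof is correct and follows essentially the same route as the paper: the paper's preceding discussion constructs exactly the $G_n$-equivariant embedding $\iota\colon V^\ast\hookrightarrow\mathcal{D}(X_{k,n})$ you use, and the corollary is then immediate from equivariance, injectivity, and the assumed equality of invariant distribution spaces. You have simply spelled out the injectivity of $\iota$ (via $\lambda\neq0$ and surjectivity of $A_\pi$) and the unimodularity check behind the isomorphism $\mathcal{D}(G_n)^{1\times H_{k,n}}\cong\mathcal{D}(X_{k,n})$, which the paper leaves implicit.
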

Having shown that deduction, we see that Theorem \ref{final-form} proves Lemma \ref{start-lem}, and Corollary \ref{final-cor} proves Theorem \ref{start-thm}.

\bibliographystyle{abbrv}
\bibliography{propo2}{}

\end{document}